\newif\iftechexplorer\techexplorerfalse
\newtheorem{theorem}{Theorem}[section]
\newtheorem{Theorem}{Theorem}
\newtheorem{Theorem*}{Theorem}
\newtheorem{lemma}[theorem]{Lemma}
\newtheorem{proposition}[theorem]{Proposition}
\newtheorem{corollary}[theorem]{Corollary}
\newtheorem{definition}[theorem]{Definition}
\def\T{\mathcal{T}}
\def\K{\mathcal{K}}
\def\C{\mathcal{C}}
\def\N{\mathbb{N}}
\def\ZZ{\mathbb{Z} \oplus \mathbb{Z}}
\def\F{\mathcal{F}}
\def\S{\mathcal{S}}
\numberwithin{equation}{subsection}
\begin{document}
\title{Knot Floer Filtration Classes of Topologically Slice Knots}
\author{Joshua Tobin}
\date{\today}
\maketitle

\tikzset{->-/.style={decoration={
	markings,
	mark=at position #1 with
	{\arrow{>}}},postaction={decorate}}
	}

\begin{abstract}
The knot Floer complex and the concordance invariant $\varepsilon$ can be used to define a filtration on the smooth concordance group. We exhibit an ordered subset of this filtration that is isomorphic to $\mathbb{N} \times \mathbb{N}$ and consists of topologically slice knots.
\end{abstract}

\section{Introduction}

The \emph{(smooth) concordance group}, denoted $\C$, is the set of all knots in $S^3$ modulo concordance, where two knots $K_1$ and $K_2$ are \emph{concordant}, denoted $K_1 \sim_C K_2$, if  $K_1$ and $K_2$ cobound a smooth, properly embedded cylinder in $S^3 \times [0,1]$. The stipulation that the embedding of the cylinder must be smooth may be relaxed to require only a locally flat topological embedding, resulting in \emph{topological concordance} and the \emph{topological concordance group}. 

The identity element of the concordance group is the equivalence class of the unknot.  Any element of its smooth concordance class is called \emph{slice}, and any element of its topological concordance class is \emph{topologically slice}. The subgroup of $\C$ consisting of topologically slice knots, denoted $\C_{TS}$, is an important object of study because of its relevance to the existence of exotic $\mathbb{R}^4$s.  

In \cite{OS04}, Ozsv\'{a}th and Szab\'o (and indepentently Rasmussen in \cite{Ras03}) defined the \emph{knot Floer complex} associated with each knot. In \cite{Hom11}, Hom defined a group structure on a quotient of the set of knot Floer complexes. The primary utility of this group, denoted $\F$, is the existence of a homomorphism $\C \rightarrow \F$ and a filtration on $\F$ that can be pulled back to a filtration on $\C$ called the \emph{knot Floer filtration}.

In this paper, we explore the structure of $\C_{TS}$ by proving the following theorem.
\begin{Theorem}
\label{mainthm}
The indexing set of the knot Floer filtration of $\C_{TS}$ contains a subset that is order isomorphic to $\N \times \N$. We can index the filtration by 
\begin{equation*}
S = \{(i,j) \mid i \in \mathbb{N}, j \in \mathbb{N} \}
\end{equation*}
where S has the lexicographical ordering. Furthermore, each successive quotient is infinite, i.e. for $(i, j), (i',j') \in S$ and $(i, j) < (i',j')$, we have
\begin{equation*}
\mathbb{Z} \subset \F_{(i',j')}/ \F_{(i, j)}. 
\end{equation*}
\end{Theorem}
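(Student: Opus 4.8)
The plan is to reduce the theorem to a single ``domination'' property of Hom's invariant $\varepsilon$, and then to establish that property with an explicit doubly indexed family of topologically slice knots built by iterating satellite operations that preserve topological sliceness. First I would recall that the knot Floer filtration is governed by $\varepsilon$: for classes $[K_1],[K_2]\in\F$ one has $[K_1]\le[K_2]$ precisely when $\varepsilon(K_1\#-K_2)\le 0$, two classes lie in the same filtration level when $\varepsilon(K_1\#-K_2)=0$, and the level attached to a class $\alpha$ is the subgroup $\F_\alpha\subseteq\F$ it determines. Using the calculus of $\varepsilon$ from \cite{Hom11} --- $\varepsilon(-K)=-\varepsilon(K)$, $\varepsilon$ vanishes on slice knots, $\varepsilon(K_1)=\varepsilon(K_2)\neq 0$ forces $\varepsilon(K_1\#K_2)=\varepsilon(K_1)$, and $\varepsilon(K_1)=0$ forces $\varepsilon(K_1\#K_2)=\varepsilon(K_2)$ --- this order is translation invariant, so the whole theorem will follow once I produce topologically slice knots $K_{i,j}$, $(i,j)\in\N\times\N$, with $\varepsilon(K_{i,j})=1$ for which
\begin{equation*}
\varepsilon\!\left(K_{i',j'}\ \#\ (-K_{i,j})^{\#\,n}\right)=1\qquad\text{for every }n\ge 1\ \text{ whenever }(i,j)<_{\mathrm{lex}}(i',j').
\end{equation*}
Such a family places $[K_{i,j}]$ strictly below $[K_{i',j'}]$ in the associated chain of filtration levels whenever $(i,j)<_{\mathrm{lex}}(i',j')$, so $(i,j)\mapsto[K_{i,j}]$ embeds $(\N\times\N,<_{\mathrm{lex}})$ into the indexing set; by transitivity it suffices to verify the displayed identity in the two cases $(i',j')=(i,j+1)$ and $(i',j')=(i+1,0)$ with $j$ arbitrary.

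For the construction I would stay inside the satellite operations that do not change the Alexander polynomial, so that Freedman's theorem keeps every $K_{i,j}$ topologically slice: untwisted Whitehead doubling and $(p,1)$-cabling both qualify, since $\Delta_{D_\pm(K,t)}\equiv 1$ and $\Delta_{K_{p,1}}(u)=\Delta_K(u^p)$. The seed would be a connected sum of $(p,1)$-cables and Whitehead doubles of the trefoil chosen so that $\varepsilon=1$ while $\tau=0$; arranging $\tau(K_{i,j})=0$ throughout is forced, since $\tau$ is additive under connected sum and $\varepsilon$ refines $\tau$, so a nonzero $\tau(K_{i,j})$ would send the $\tau$ of the left hand knot above to $-\infty$, in conflict with the value $\varepsilon=+1$. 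I would then let the second coordinate $j$ record a sequence of operations enlarging the part of $CFK^\infty$ that carries $\varepsilon$ while keeping the classes Archimedean-comparable, and the first coordinate $i$ record a heavier operation, applied to the whole $j$-family, whose output has an $\varepsilon$-scale that overtakes every iterated connected sum of the complexes with smaller first coordinate. The substance of the proof is the resulting computation: using the satellite formulas of \cite{Hom11} one computes the invariants of $K_{i,j}$ that determine $\varepsilon$, discarding the acyclic (``box'') summands of $CFK^\infty$, which do not affect $\varepsilon$, and one must read off from the parameters of the surviving staircase part that each successive operation in the first coordinate dominates every iterated connected sum of the earlier staircases. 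This domination estimate is the main obstacle, because $\varepsilon$ of a connected sum is not governed by any simple formula; I expect to extract from the staircase an auxiliary integer invariant that is subadditive under connected sum but grows strictly faster under the chosen satellite operation, and to deduce the sign of $\varepsilon$ from it.

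For the statement about successive quotients, fix $(i,j)<_{\mathrm{lex}}(i',j')$ and put $x=[K_{i',j'}]$. Then $x$ lies in the filtration level $\F_{(i',j')}$; it has infinite order in $\F$ because $\varepsilon(mK_{i',j'})=1$ for all $m\ge 1$; and no nonzero multiple $mx$ lies in $\F_{(i,j)}$, since for $m\ge 1$ the $\varepsilon$-calculus above together with the displayed identity gives $\varepsilon\!\left(mK_{i',j'}\ \#\ (-K_{i,j})^{\#\,n}\right)=1$ for every $n\ge 1$, so $mx$ exceeds every multiple of $[K_{i,j}]$ and hence is not bounded by it, and the case $m\le -1$ follows by applying $\varepsilon(-K)=-\varepsilon(K)$. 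Therefore $\langle x\rangle\cong\mathbb{Z}$ injects into $\F_{(i',j')}/\F_{(i,j)}$, which is the claimed conclusion.
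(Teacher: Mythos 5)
Your framing is the right one and matches the paper's: the filtration is indexed by Archimedean equivalence classes of the $\varepsilon$-ordering, your condition $\varepsilon\bigl(K_{i',j'}\,\#\,(-K_{i,j})^{\#\,n}\bigr)=1$ for all $n$ is exactly the statement $[K_{i,j}]\ll[K_{i',j'}]$, the reduction by transitivity to the two cover relations is fine, and your argument that $\langle[K_{i',j'}]\rangle\cong\mathbb{Z}$ injects into $\F_{(i',j')}/\F_{(i,j)}$ is the same as the paper's. But the proposal stops exactly where the proof begins: you never exhibit the family $K_{i,j}$, never compute its knot Floer complexes, and you explicitly defer the domination estimate to a hoped-for ``auxiliary integer invariant, subadditive under connected sum,'' which you neither define nor show to exist. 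In the paper this deferred part is the entire content: the explicit family $\K_{n,p}=D_{p,p+1;n,n(p^2+p)+1}$, the staircase computations of Propositions \ref{k-staircase}--\ref{bigcableprop} for these cables and for the compensating torus-knot cables, the iterative correction $\T^i_{n,p}$ by connected-summing with $\pm\S_q$ together with topologically concordant torus knots (so the knot stays topologically slice while the unwanted staircase summands are absorbed), and the Archimedean comparison lemmas (Lemmas \ref{Farch1}, \ref{Farch2}, \ref{Farch3}, the last proved by an explicit filtered change of basis), which finally order the classes $(1,2n'-1,1,n'(p'-2)-1,\hdots)$ lexicographically in $(n,p)$. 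Without a concrete substitute for these steps you have a strategy, not a proof; in particular the $\mathbb{N}\times\mathbb{N}$ (rather than $\mathbb{N}$) structure hinges precisely on Lemma \ref{Farch2}-type comparisons of four-parameter staircases, which no soft $\varepsilon$-calculus supplies.

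Two specific points in your sketch are also off. First, the claim that $\tau(K_{i,j})=0$ is \emph{forced} is wrong, because $\tau$ is not order-preserving on $\F$: by Lemma \ref{Farch1} one has $(1,2)\gg(1,1)$, i.e.\ $\varepsilon\bigl(T_{2,5}\,\#\,m(-T_{2,3})\bigr)=1$ for every $m$ even though $\tau=2-m<0$; domination between positive staircases with unequal $\tau$ is exactly the phenomenon the construction exploits, and the paper's $\T_{n,p}$ are Archimedean equivalent to positive staircases, not to $\tau=0$ complexes. Second, restricting to Alexander-polynomial-one patterns (untwisted doubles and $(p,1)$-cables) so that each $K_{i,j}$ is itself polynomial-one is an unnecessary and possibly fatal constraint: the paper's two-parameter staircases come from the iterated cables $D_{p,p+1;n,n(p^2+p)+1}$, whose Alexander polynomials are nontrivial, and topological sliceness is recovered not by applying Freedman to the knot directly but by subtracting the corresponding cables of torus knots, which are topologically concordant because $D$ is topologically slice. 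You would need either to show that your restricted operations can produce staircases $(1,a,1,b,\hdots)$ with independently tunable parameters, or to adopt this correction trick; as written, neither is done.
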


The structure of $\C_{TS}$ has been the subject of several previous results. In \cite{End95}, Endo showed that $\C_{TS}$ contains a subgroup isomorphic to $\mathbb{Z}^{\infty}$. Our result also extends the work of Hom in \cite{Hom11}, who showed that the indexing set of the knot Floer filtration of $\C$ contains a subset that is order isomorphic to $\mathbb{N}$ and has topologically slice representatives. It can also be compared to the work of Hancock, Hom, and Newman, who in \cite{HHN12} found a subset of the indexing set of the knot Floer filtration that is order isomorphic to $\mathbb{N} \times \mathbb{Z}$, but whose representatives are not topologically slice. 

The remainder of the paper is organized into two parts. Section 2 provides background on knot Floer homology and the group $\F$, including some necessary results about the ordering on $\F$. Section 3 will present the proof of Theorem \ref{mainthm}. The proof itself involves computing the knot Floer complexes of several families of knots arising as iterated cables of the Whitehead double of the trefoil knot. We will state the results of these computations first without proof and use them to demonstrate the proof of Theorem \ref{mainthm}. The computations will be performed in detail in the last section of the paper. 

\subsection*{Acknowledgements}
I would like to thank Michael Gartner, James Groccia, Christian Moscardi, Siran Li, Professor Jennifer Hom, and Professor Peter Horn for their involvement in the 2012 Columbia REU, where the ideas for this paper began. I am particularly grateful to Professor Hom, whose continual involvement and advice throughout the process of writing this paper has been invaluable. 

%
%
%
%

\section{Knot Floer homology and the group $\F$}
\label{kfh}

\subsection{Knot Floer homology}

Ozsv\'ath and Szab\'o \cite{OS04}, and independently Rasmussen \cite{Ras03}, associated to each knot $K \subset S^3$ a $\mathbb{Z}$-filtered chain complex over $\mathbb{F}[U,U^{-1}]$ ($\mathbb{F}$ denotes the field $\mathbb{Z}/2\mathbb{Z}$ and $U$ is a formal variable) known as the \emph{knot Floer complex} of $K$ and denoted $CFK^{\infty}(K)$. The homological $\mathbb{Z}$-grading on $CFK^{\infty}(K)$ is called the \emph{Maslov grading} $M$, and the $\mathbb{Z}$ filtration the \emph{Alexander filtration} $A$. The $-(U$-exponent$)$ induces a second $\mathbb{Z}$ filtration structure on $CFK^{\infty}$, allowing us to view it as a $\ZZ$-filtered chain complex with ordering given by $(i, j) \leq (i',j')$ if and only if $i \leq i'$ and $j \leq j'$. 

The filtered chain homotopy type of $CFK^{\infty}(K)$ is a knot invariant. The knot Floer complex of the connected sum of two knots $K_1$ and $K_2$ is the tensor product of the knot Floer complexes of $K_1$ and $K_2$, and the knot Floer complex of $-K$ is $CFK^{\infty}(K)^*$, the dual of the knot Floer complex of $K$. 

It is often helpful to think of $CFK^{\infty}(K)$ as a set of points and arrows in the lattice $\ZZ$. $CFK^{\infty}(K)$ is a free module over $\mathbb{F}[U, U^{-1}]$, so it is generated by some set $\{ x_i\}$. An element of the form $U^j x_i$ is placed at position $(-j,A(x_i)-j)$. Given a basis $\{ x_i \}$ for $CFK^{\infty}$, the differential can be depicted by arrows; one from each $x_i$ to each $x_j$ that appears with nonzero coefficient in $\partial x_i$. The length of the arrow from $x_i$ to $x_j$ represents the change in $\mathbb{Z} \oplus \mathbb{Z}$-filtration from $x_i$ to $x_j$. 

Given a subset $S \subset \ZZ$, let $C\{S\}$ be the subset of $CFK^{\infty}(K)$ consisting of elements of $CFK^{\infty}(K)$ that are in $S$ (along with the quotient differential). If for all $(i,j) \in S$, $(i',j') \leq (i,j)$ implies that $(i',j')$ is also in $S$, then $C\{S\}$ is a subcomplex of $C$. For appropriately defined $S$, $C\{S\}$ can also inherit a quotient or subquotient complex structure. For example, for any integer $s$, $C\{i=0, j \leq s\}$ naturally inherits a quotient subcomplex structure as the quotient of the subcomplex $C\{i \leq 0, j \leq s\}$ by the subcomplex $C \{ i \leq -1, j \leq s \}$.

\subsection{The invariant $\varepsilon$}
\label{Fdef}
$\F$ will be defined in this subsection as a set of $\varepsilon$-equivalence classes of knot Floer complexes, defined below. The definition of the invariant $\varepsilon$ due to Hom in \cite{Hom12} will rely on two other concordance invariants $\tau$ and $\nu$ both due to Ozsv\'{a}th and Szab\'{o}. The integer valued concordance invariant $\tau$ was defined in \cite{OS03} as
\begin{equation*}
\tau(K) = \text{min} \{s \mid \imath:C\{i=0, j \leq s\} \rightarrow C \{i=0 \} \text{ induces a non-trivial map on homology} \}
\end{equation*}
where $\imath$ denotes filtered chain complex inclusion.

Now to define $\nu$ as in \cite{OS11}, we first define subquotient complexes 
\begin{equation*}
A_s = C \{ \text{max}(i,j-s)=0 \} \text{ and }A_s' = C \{ \text{min}(i,j-s) = 0 \}
\end{equation*}
and maps 
\begin{equation*}
\nu_s:A_s \rightarrow C \{i = 0\} \text{ and } \nu_s': C\{i = 0 \} \rightarrow A_s' ,
\end{equation*}
where $\nu_s = \imath \circ \pi$  for $\pi: A_s \rightarrow C \{i = 0, j \leq s\}$ the quotient map and $\imath:C \{i = 0, j \leq s\} \rightarrow C \{ i = 0 \}$ the inclusion map, and $\nu_s' = \imath' \circ \pi'$ for $\pi': C\{i = 0 \} \rightarrow C \{ i = 0, j \geq s \}$ the quotient map and $\imath':C \{ i = 0, j \geq s \} \rightarrow A_s'$ the inclusion map. 

Define 
\begin{equation*}
\nu(K) = \text{min} \{ s \in \mathbb{Z} \mid v_s: A_s \rightarrow C\{i=0\} \text{ induces a non-trivial map on homology} \},
\end{equation*}
and similarly
\begin{equation*}
\nu'(K) = \text{max} \{ s \in \mathbb{Z} \mid v_s':C \{i = 0 \} \rightarrow A_s' \text{ induces a non-trivial map on homology} \}.
\end{equation*}
Then we are ready to define $\varepsilon$ as in \cite{Hom12}. 

\begin{definition}
The invariant $\varepsilon$ is defined in terms of $\tau$ and $\nu$ as
\begin{equation*}
\varepsilon (K) = 2 \tau(K) - \nu (K) - \nu'(K) .
\end{equation*}
\end{definition}

The invariant $\varepsilon$ is $\{-1,0,1\}$-valued because $\nu(K) = \tau(K)$ or $\tau(K) + 1$ and $\nu'(K) = \tau(K)$ or $\tau(K) -1$. It has the following properties, also from \cite{Hom12}. 
\begin{enumerate}
\item If $K$ is slice, then $\varepsilon(K) = 0$. 
\item $\varepsilon(-K) = -\varepsilon(K)$. 
\item If $\varepsilon(K) = \varepsilon(K')$, then $\varepsilon(K \# K') = \varepsilon(K)$. 
\item If $\varepsilon(K) = 0$, then $\varepsilon(K \# K') = \varepsilon(K')$. 
\end{enumerate}

If $C$ is a filtered chain complex, let $C_{i,j}$ denote the $(i, j)^{\text{th}}$-filtered subcomplex of $C$. A basis $\{ x_k \}$ for a filtered chain complex $C$ over $\mathbb{F}[U,U^{-1}]$ is a \emph{filtered basis} if $\{U^n \cdot x_k \mid U^n \cdot x_k \in C_{i,j}, n \in \mathbb{Z} \}$ is a basis for $C_{i,j}$ over $\mathbb{F}$ for all $i, j \in \mathbb{Z}$. Given a filtered basis $\{x_k \}$ for $C$, we can perform a filtered change of basis to produce a new filtered basis $\{x'_k \}$ for $C$, where
\begin{equation*}
x'_k = \begin{cases}
x_k + x_l & k = n \\
x_k & k \neq n
\end{cases}
\end{equation*}
for some $n$ and $l$ such that $x_l$ is in a filtration level that is less than or equal to the filtration level of $x_n$. 

The description of $\varepsilon$ becomes simpler given a filtered basis for $CFK^{\infty}(K)$. A basis $\{x_k \}$ of a $\ZZ$-filtered chain complex is called \emph{vertically simplified} if for each basis element, exactly one of the following holds:
\begin{itemize}
\item There is a unique incoming vertical arrow into $x_i$ and no outgoing vertical arrows.
\item There is a unique outgoing vertical arrow from $x_i$ and no outgoing vertical arrows.
\item There are no vertical arrows entering or leaving $x_i$.
\end{itemize}
A \emph{horizontally simplified basis} is defined similarly using horizontal arrows on basis elements. Every $\ZZ$-filtered chain complex has a vertically simplified basis and a horizontally simplified basis by \cite{LOT08} Proposition 11.52. It is not known whether every $\ZZ$-filtered chain complex arising as a knot Floer homology complex has a basis that is simultaneously vertically and horizontally simplified. However, it is possible to find simultaneously vertically and horizontally simplified bases for all of the $\ZZ$-filtered chain complexes discussed in this paper. 

For every knot $K$,there is a unique element of a vertically simplified basis for $CFK^{\infty}(K)$ with no incoming or outgoing arrows. Call this the \emph{vertically distinguished} element. By \cite{Hom12}, we can always find a horizontally simplified basis for $CFK^{\infty}(K)$ in which one of the basis elements $x_0$ is the vertically distinguished element of some vertically simplified basis. Given $x_0$, we can define $\varepsilon(K)$ in terms of such a basis as:
\begin{enumerate}

\item $\varepsilon (K) = 1$ if there is a unique incoming horizontal arrow into $x_0$.

\item $\varepsilon(K) = -1$ if there is a unique outgoing horizontal arrow from $x_0$.

\item $\varepsilon(K) = 0$ if there are no horizontal arrows entering or leaving $x_0$.

\end{enumerate}

\begin{definition}
Two knots $K_1$ and $K_2$ are $\varepsilon$-equivalent, denoted $\sim_{\varepsilon}$, if $\varepsilon(K_1 \# -K_2) = 0$.
\end{definition}

This is indeed an equivalence relation: reflexivity and symmetry are clear, and transitivity follows from the properties above since if $K \sim_{\varepsilon} J \sim_{\varepsilon} L$, then \begin{equation*}
\varepsilon(K \# -L) = \varepsilon(K \# -L \# (J \# -J)) = \varepsilon((K \# -J) \# (J \# -L)) = 0 .
\end{equation*}
Because $\varepsilon$-equivalence can also be seen as a property of the knot Floer complexes of two knots $K_1$ and $K_2$, we will sometimes write $CFK^{\infty}(K_1) \sim_{\varepsilon} CFK^{\infty}(K_2)$. With the definition of $\varepsilon$-equivalence, we can define the group $\F$ as follows.

\begin{definition}
The group $\F$ is 
\begin{equation*}
\F = (\{CFK^{\infty}(K) \mid K \subset S^3 \}, \otimes)/ \sim_\varepsilon .
\end{equation*}
\end{definition}
The first property of $\F$ to note is that there is a natural homomorphism $\C \rightarrow \F$ obtained by sending the equivalence class of a knot $K$ to the equivalence class of its knot Floer complex $CFK^{\infty}(K)$. We will explore some of the other properties of $\F$ in the next two subsections.

\subsection{L-space knots}
\label{lspacesub}

Because computation of the knot Floer complex of a knot is difficult in general, we will restrict our attention to $\varepsilon$-equivalence classes of a particular type of knot for which the knot Floer complex is known.

For any closed, oriented 3-manifold $Y$, \emph{Heegaard Floer homology}, first defined by Ozsv\'ath and Szab\'o in \cite{OS04}, associates to $Y$ a finitely generated abelian group $\widehat{HF}(Y)$. Also due to Ozsv\'ath and Szab\'o in \cite{OS05}, a space $Y$ is an \emph{L-space}  if $H_1(Y;\mathbb{Q})=0$ and $\widehat{HF}(Y)$ has rank equal to the number of elements in $H_1(Y;\mathbb{Z})$. All lens spaces are L-spaces, motivating the name.

For a knot $K$, if there exists a positive integer $n$ such that $n$ surgery on $K$ yields an L-space, then $K$ is called an \emph{L-space knot}. 

Some familiar families of knots are L-space knots. Let $K$ be a knot, and $(p,q)$ be a pair of relatively prime integers. Denote by $K_{p,q}$ the $(p,q)$-cable of $K$, where $p$ denotes the longitudinal winding. The iterated cable $(K_{p,q})_{r,s}$ will be denoted $K_{p,q;r,s}$. Hedden showed in \cite{Hed09} that if $K$ is an L-space knot and $q/p \geq 2 g(K) -1$, then $K_{p,q}$ is an L-space knot. Note that the unknot is an L-space knot, since $p/q$ surgery on the unknot gives $L(p,q)$. This implies that for $p$, $q > 0$, $T_{p,q}$ is an L-space knot, since the $(p,q)$ cable of the unknot is $T_{p,q}$.  This can also be seen directly by the fact that $pq \pm 1$ surgery on $T_{p,q}$ yields a lens space (see \cite{Mo}). 

Knowing that a knot $K$ is an L-space knot gives us two important pieces of information about $K$, as shown in \cite{OS05}.

The first is that there is an even positive integer $M$ and an increasing sequence $n_0, \hdots ,n_M$ of nonnegative integers with $n_i + n_{M-i} = 2g(K)$ and $n_0 = 0$ such that the Alexander polynomial of $K$ is of the form
\begin{equation*}
\label{lspace_form}
\Delta_K(t) = \displaystyle\sum\limits_{i=0}^{M} (-1)^i t^{n_i}.
\end{equation*}

The second is that the knot Floer complex of $K$ is determined by the sequence $n_i$. In fact, there is a filtered basis $\{x_i\}, i=0,\hdots ,M$ over $\mathbb{F}[U,U^{-1}]$ for $CFK^{\infty} (K)$ with differential given by 
\begin{equation*}
\partial x_i = \begin{cases}
x_{i-1} + x_{i+1} & i \text{ odd} \\
0 & i \text{ even}.
\end{cases}
\end{equation*}
Thinking of the knot Floer complex in the lattice, the arrow from $x_i$ to $x_{i-1}$ is horizontal of length $n_i - n_{i-1}$ and the arrow from $x_i$ to $x_{i+1}$ is vertical of length $n_{i+1} - n_i$. See Figure 1 for an example.

We will call any $\ZZ$-filtered chain complex over $\mathbb{F}[U,U^{-1}]$ that takes the above form a \emph{staircase complex} because of its appearance in the lattice $\ZZ$. These complexes will be denoted as
\begin{equation*}
(a_0, a_1, a_2, \hdots , a_n)
\end{equation*}
where for $i$ even, $a_i$ is the length of the horizontal arrow from $x_{i+1}$ to $x_i$, and for $i$ odd, $a_i$ is the length of the vertical arrow from $x_i$ to $x_{i+1}$. Here $a_n$ is the last arrow before the point of symmetry.

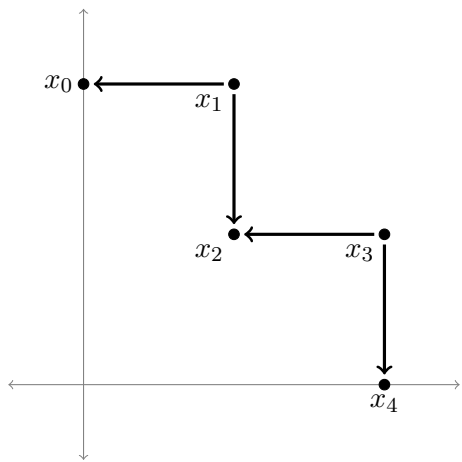
\begin{figure}[htbp]
\label{Ctensor}

\begin{tikzpicture}
	\begin{scope}[thin, gray]
		\draw [<->] (-1, 0) -- (5, 0);
		\draw [<->] (0, -1) -- (0, 5);
	\end{scope}
	
	\filldraw (0, 4) circle (2pt) node[] (y0){};
	\filldraw (2, 4) circle (2pt) node[] (y1){};
	\filldraw (2, 2) circle (2pt) node[] (y2){};
	\filldraw (4, 2) circle (2pt) node[] (y3){};
	\filldraw (4, 0) circle (2pt) node[] (y4){};

	\draw [very thick, <-] (y0) -- (y1);
	\draw [very thick, <-] (y4) -- (y3);	
	\draw [very thick, <-] (y2) -- (y1);
	\draw [very thick, <-] (y2) -- (y3);

	\node [left] at (y0) {$x_0$};
	\node [below left] at (y1) {$x_1$};
	\node [below left] at (y2) {$x_2$};
	\node [below left] at (y3) {$x_3$};
	\node [below] at (y4) {$x_4$};

\end{tikzpicture}
\caption{A staircase complex $(1,1) = CFK^{\infty}(T_{2,5})$. Note that the fact that the lengths of each of the arrows is 1 corresponds to the fact that the Alexander polynomial of $T_{2,5}$ is $1 - t + t^2 -t^3 +t^4$.}
\end{figure}

Note that every such sequence $(a_0, \hdots, a_n)$ defines a staircase complex, but it is not clear that this chain complex must arise as the knot Floer complex of some knot. However, the definition of $\varepsilon$ (and the ordering on $\F$ defined in the next subsection) may be extended formally to include all staircase complexes. Therefore we will often speak of $\varepsilon$-equivalence of staircases and their tensor products. For a more detailed explanation, see  \cite{HHN12}. 

We will often discuss the knot Floer complex of a knot up to $\varepsilon$-equivalence. We can think of $\varepsilon$-equivalence as an equivalence relation on knots and staircase complexes. For example, if $K$ is a knot, then we might write $CFK^{\infty}(K) \sim_{\varepsilon} (a_0, a_1, \hdots, a_n)$ to mean that $CFK^{\infty}(K)$ is $\varepsilon$-equivalent to a staircase complex of the form $(a_0, a_1, \hdots, a_n)$. 

We will often omit all $a_i$ past a certain point for notational convenience; for example, the staircase
\begin{equation*}
(a_0,a_1,a_2, \hdots ,a_n)
\end{equation*}
could be written as 
\begin{equation*}
(a_0, a_1, \hdots ).
\end{equation*}

When describing tensor products of staircase complexes, we will use the symbol $+$ to mean $\otimes$. Repeated tensor products of a complex with itself will be described using multiplicative coefficients in the obvious way. 

The following lemma due to Hancock, Hom, and Newman in \cite{HHN12} will allow us to describe (up to $\varepsilon$-equivalence) the tensor products of certain staircase complexes as staircase complexes:
\begin{lemma}[{\cite[Lemma 3.1]{HHN12}}]
\label{staircaselemma}
Let $a_i, b_j > 0$. If  $m$ is even and $\max \{ a_i \mid i \text{ odd} \} \leq b_j \leq \min \{ a_i \mid i \text{ even} \}$ for $i=1,\hdots ,m$ and $j=1,\hdots ,n$, then
\begin{equation*}
(a_1,a_2,\hdots ,a_m) + (b_1,b_2,\hdots ,b_n) \sim_{\varepsilon} (a_1,a_2,\hdots ,a_m, b_1, b_2, \hdots, b_n).
\end{equation*}

\end{lemma}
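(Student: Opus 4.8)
The plan is to prove the lemma by a direct manipulation of the tensor product. Fix simplified bases $\{x_i\}$ and $\{y_j\}$ for the two staircase factors, so that $C := (a_1,\dots,a_m)\otimes(b_1,\dots,b_n)$ has the filtered basis $\{x_i\otimes y_j\}$ with $\partial(x_i\otimes y_j)=(\partial x_i)\otimes y_j+x_i\otimes(\partial y_j)$; drawn in $\ZZ$, this complex is a ``product of two staircases'', a thickened grid. The claim is that a sequence of filtered changes of basis exhibits $C$ as a direct sum
\begin{equation*}
C \;\cong\; (a_1,\dots,a_m,b_1,\dots,b_n)\ \oplus\ (\text{acyclic boxes}),
\end{equation*}
where a \emph{box} is the four-generator complex with generators $T,L,B,D$ at the corners of a rectangle in the lattice and differential $\partial T = L+B$, $\partial L=\partial B=D$, $\partial D=0$. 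Writing $\mathcal B$ for a box, one checks that the complex $\mathcal B\{i=0\}$ is acyclic; since the maps defining $\tau$, $\nu$, and $\nu'$ each have a complex of the form $C\{i=0\}$ as source or target, a box summand is invisible to all three, and hence leaves $\varepsilon$ and the $\varepsilon$-equivalence class unchanged. Granting the displayed splitting, the lemma follows.

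To produce the new basis I would work corner by corner, and it is here that the hypothesis $\max\{a_i\mid i\text{ odd}\}\le b_j\le\min\{a_i\mid i\text{ even}\}$ is indispensable. For a suitable product generator $x_k\otimes y_\ell$, designate it as the top $T$ of a box, take $L$ and $B$ to be the appropriate sums of the generators lying immediately to its left and immediately below it, $D$ the corresponding sum of those diagonally below-left, and then correct the remaining generators in the affected rows and columns --- each correction being the addition of one of $L$, $B$, $D$ --- so that no component of the differential runs between the box and its complement. The inequality on the $a_i$ and $b_j$ is exactly the condition that makes each correction add an element in a $\ZZ$-filtration level $\le$ that of the element being corrected, so that the change of basis is genuinely filtered: concretely, it forces the vertical displacement produced by a $b$-step to be at least that of any competing vertical $a$-step and the horizontal displacement of a $b$-step to be at most that of any competing horizontal $a$-step, which is what stacks the relevant generators one below another in the lattice order. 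Running over all the corners splits off every box and leaves precisely the staircase $(a_1,\dots,a_m,b_1,\dots,b_n)$.

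The step I expect to be the main obstacle is the organization of this change of basis: singling out the right generator to head each box, writing down every correction explicitly, checking case by case (using the alternating inequality) that each correction respects the $\ZZ$-filtration, and verifying that after all corrections the complement of the boxes really is the asserted staircase rather than a staircase carrying spurious extra arrows. A subsidiary nuisance is keeping the index bookkeeping consistent with the symmetry convention for staircases --- only the arrows up to the point of symmetry being listed --- and handling the degenerate cases in which some $a_i$ equals one of the bounds; these become mechanical once a workable notation for the product complex is in place.
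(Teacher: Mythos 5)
You should first note that the paper contains no proof of this statement at all: it is imported verbatim as \cite[Lemma 3.1]{HHN12}, so there is no in-paper argument to compare against and your write-up has to stand on its own. The strategy you outline --- exhibit $(a_1,\dots,a_m)\otimes(b_1,\dots,b_n)$, after a filtered change of basis, as the concatenated staircase direct sum acyclic ``box'' complexes, and then argue that box summands are invisible to $\tau$, $\nu$, $\nu'$ --- is the natural one for this kind of statement, and your observation that vertical acyclicity of a box on the $C\{i=0\}$ side is what makes $\tau$, $\nu$, $\nu'$ blind to it is correct. One soft point should still be said explicitly: the lemma asserts an $\varepsilon$-\emph{equivalence}, i.e.\ $\varepsilon\bigl((A\otimes B)\otimes D^*\bigr)=0$ for $D$ the concatenation, so you need the boxes to remain harmless after tensoring with $D^*$ (a box tensored with a finitely generated free complex is again a sum of boxes), or equivalently the standard fact that a filtered homotopy equivalence up to such acyclic summands implies $\varepsilon$-equivalence; ``leaves the $\varepsilon$-equivalence class unchanged'' is asserted rather than argued.

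The genuine gap is that the filtered change of basis --- which is the entire mathematical content of the lemma --- is never constructed. You say that ``for a suitable product generator'' one takes ``the appropriate sums'' as the other three corners, that the remaining generators are corrected so that ``no component of the differential runs between the box and its complement,'' and that the hypothesis $\max\{a_i\mid i\text{ odd}\}\le b_j\le\min\{a_i\mid i\text{ even}\}$ ``is exactly the condition'' making each correction filtered; none of this is exhibited or verified, and you yourself flag it as the main obstacle. In particular, nothing in the proposal shows why the complement of the boxes is the concatenation in the order $a_1,\dots,a_m,b_1,\dots,b_n$, which is precisely where the hypothesis that $m$ is even (so that the first staircase is listed up to a vertical step and $b_1$ stays horizontal in the concatenated list) must enter --- you never use it --- nor why no diagonal components survive between the staircase and the boxes after all corrections. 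The plan is viable: in a small case such as $(1,2)\otimes(1)$ one can take the obvious seven product generators as the staircase and split off two boxes whose side corners are two-term sums of product generators, and the step-length inequalities are visibly what make those sums filtered; but producing this pairing for arbitrary $m$, $n$, checking the filtration condition case by case, and verifying that the residual summand carries exactly the staircase differential is the proof, and as submitted it is missing.
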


\subsection{Ordering of $\F$}

We will begin by recalling some basic facts about totally ordered abelian groups. Let $G$ be a totally ordered abelian group. The absolute value $|a|$ of an element $a$ in $G$ is $a$ if $a \geq id_G$ and $-a$ otherwise. Two elements $a, b \in G$ are \emph{Archimedean equivalent}, denoted $a \sim_{\text{Ar}} b$, if there exist $n,m \in \N$ such that 
\begin{equation*}
|a| \leq n |b| \text{ and } |b| \leq m |a|.
\end{equation*} 
The set of Archimedean equivalence classes of $G$ inherits an ordering from the group. 

If $|a| < |b|$ and $a$ and $b$ are in different Archimedean equivalence classes, we write
\begin{equation*}
|a| \ll |b| 
\end{equation*}
and say that $b$ \emph{dominates} $a$.
The following lemma will be important to later calculations.
\begin{lemma}
\label{archlemma}
Let G be a totally ordered Abelian group, and let $a,b \in G$ such that $a \gg b > 0$. Then
\begin{equation*}
a \pm b \sim_{\text{Ar}} a .
\end{equation*}
\end{lemma}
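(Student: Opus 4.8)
The plan is to work directly from the definition of Archimedean equivalence. We are given $a \gg b > 0$, which by definition means $|a| < |b|$ is \emph{false}---wait, rather it means $b > 0$, $a > 0$ (since $a \gg b > 0$ forces $a > 0$), and $a$ and $b$ lie in different Archimedean classes with $|b| \ll |a|$, i.e. $b < a$ and there is no $n \in \N$ with $|a| \leq n|b|$. To prove $a \pm b \sim_{\mathrm{Ar}} a$ I must produce natural numbers $n, m$ with $|a \pm b| \leq n|a|$ and $|a| \leq m|a \pm b|$. The first inequality will be easy in both sign cases; the second is where the domination hypothesis does the work.

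First I would handle $a + b$. Since $a, b > 0$ we have $a + b > 0$, so $|a+b| = a+b$. For one direction, $a + b \leq a + a = 2a$ because $b < a$ (as $b$ is dominated by $a$, in particular $|b| < |a|$), giving $|a+b| \leq 2|a|$. For the reverse, $a + b \geq a$ since $b > 0$, so $|a| = a \leq a + b = |a+b|$, i.e. $m = 1$ works. Hence $a + b \sim_{\mathrm{Ar}} a$.

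Next I would handle $a - b$. The key preliminary claim is that $a - b > 0$: indeed $a > b$ because $|b| \ll |a|$ implies in particular $|b| < |a|$, and both are positive, so $a - b > 0$ and $|a - b| = a - b$. Then $a - b < a$ gives $|a - b| \leq 1 \cdot |a|$ immediately. The reverse inequality $|a| \leq m|a-b|$ is the one real step: suppose for contradiction that no such $m$ exists, so $a > n(a-b)$ for all $n \in \N$. I would rearrange this in the ordered group: $a > n a - n b$ gives $n b > n a - a = (n-1) a$, hence $n b \geq (n-1) a$ for all $n$, and in fact $n b > (n-1)a$. Taking $n = k+1$ yields $(k+1) b > k a$, so $a < \frac{k+1}{k} b$; more usefully, $(k+1) b > k a \geq k b$ is automatic, so I instead note $(k+1)b > ka$ for all $k \geq 1$ means $2b > a$ (take $k=1$), and more generally this bounds $a$ by a multiple of $b$: explicitly from $(k+1)b > ka$ we cannot directly divide, but we can argue that $a \leq 2b$ already contradicts $b \ll a$ (which requires $a > Nb$ to fail for... no: $b \ll a$ means for every $N$, $|a| > N|b|$ is \emph{not} required---let me restate: $b \ll a$ means $b$ and $a$ are in different classes with $|b| < |a|$, equivalently there is NO $n$ with $|a| \leq n|b|$). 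So from $k=1$ in $(k+1)b > ka$ we get $2b > a$, i.e. $a < 2b \leq 2|b|$, which directly says $|a| \leq 2|b|$ (using $a>0$), contradicting $b \ll a$. Therefore such an $m$ exists and $a - b \sim_{\mathrm{Ar}} a$.

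The main obstacle, such as it is, is bookkeeping the inequalities in a general ordered abelian group where one cannot divide, so all manipulations must stay additive and I must be careful that multiplication by positive integers preserves strict inequalities (which it does in a totally ordered abelian group: if $x > 0$ then $nx > 0$, and if $x > y$ then $nx > ny$). Once that is internalized, the proof is short: the only genuinely substantive point is converting the assumed failure of $|a| \leq m|a-b|$ into the statement $2b > a$, which contradicts the domination hypothesis $b \ll a$. I would present the $a+b$ case and $a-b$ case in parallel, noting that the positivity of $a \pm b$ in each case is what makes the absolute values disappear and lets the argument reduce to the two one-sided bounds.
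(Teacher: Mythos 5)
Your proof is correct and follows essentially the same route as the paper: in both cases the point is that $a \leq a+b \leq 2a$ and $a-b \leq a \leq 2(a-b)$, with the domination hypothesis supplying $a > 2b$. The only cosmetic difference is that you obtain the bound $a \leq 2(a-b)$ by contradiction (deriving $2b > a$ from its failure), whereas the paper notes directly that $a - 2b > 0$ gives $2(a-b) > a$.
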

\begin{proof}
First we note that $(a-b) -a = -b < 0 \Rightarrow  a - b < a$. Now $\forall n \in \mathbb{N}, a- nb > 0 $ since $a \gg b$. Thus we have $a - 2(a-b) = 2b - a = -(a - 2b) < 0 \Rightarrow 2(a-b) > a$. Similarly, $a+b > a$ since $b > 0$, but $2a > a+b$ since $a \gg b$. 
\end{proof}

The invariant $\varepsilon$ induces a total ordering on $\F$ given by 
\begin{equation*}
[CFK^{\infty}(K_1)] > [CFK^{\infty}(K_2)] \text{ if } \varepsilon(K_1 \# - K_2) = 1,
\end{equation*}
where $[CFK^{\infty}(K)]$ denotes the $\varepsilon$-equivalence class of $K$. For notational simplicity, we will often omit the brackets and write $CFK^{\infty}(K_1) > CFK^{\infty}(K_2)$. The following lemmas, due to Hancock, Hom, and Newman in \cite{HHN12} concern the Archimedean equivalence classes of staircase complexes.

\begin{lemma}[{\cite[Lemma 4.1]{HHN12}}]
\label{Farch1}
Let $a_i, b_j > 0$. If $b_1 > a_1$ or if $b_1 = a_1$ and $b_2 < a_2$, then
\begin{equation*}
(a_1,a_2, \hdots ,a_m) \gg (b_1, b_2, \hdots , b_n).
\end{equation*}
\end{lemma}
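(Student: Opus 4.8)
The plan is to unwind $A \gg B$ into a statement about $\varepsilon$ of tensor products and then to evaluate that $\varepsilon$ from the basis-level description of the invariant. Write $A = (a_1,\dots,a_m)$ and $B = (b_1,\dots,b_n)$; each is a staircase with all steps positive, so $\varepsilon(A) = \varepsilon(B) = 1$, both are positive in $\F$, and $|A| = A$, $|B| = B$. First I would record the elementary equivalence: for positive $A, B$, one has $A \gg B$ if and only if $kB < A$ for every $k \in \N$. The forward direction is immediate from the definition of domination; conversely, if $kB < A$ for all $k$ then no $m$ satisfies $A \le mB$, so $A$ and $B$ lie in distinct Archimedean classes, while $B = 1\cdot B < A$ gives $B < A$. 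Using the order on $\F$ and $\varepsilon(-X) = -\varepsilon(X)$, the lemma reduces to showing
\[
\varepsilon\!\left((b_1,\dots,b_n)^{\otimes k} \otimes (a_1,\dots,a_m)^{\ast}\right) = -1 \qquad \text{for all } k \ge 1,
\]
assuming $b_1 > a_1$, or $b_1 = a_1$ and $b_2 < a_2$.

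To evaluate this I would use the characterization of $\varepsilon$ via a horizontally simplified basis. Each staircase has one-dimensional vertical homology, generated by its vertically distinguished generator (the corner with no vertical arrows); since $\varepsilon(A)=\varepsilon(B)=1$, in a horizontally simplified basis that generator carries a unique incoming horizontal arrow of length $a_1$ (resp. $b_1$), and dualizing reverses its direction. Put $D = B^{\otimes k}\otimes A^{\ast}$. Its vertically distinguished generator $\xi$ represents the generator of the one-dimensional vertical homology of $D$; the crucial subtlety is that $\xi$ is in general \emph{not} the naive tensor of the distinguished generators of the factors. Because $D$ is not itself a staircase, that tensor must be corrected by lower-filtration terms supported on the second generators along the staircases of the factors, so that $\xi$ becomes a vertical cycle with no outgoing vertical arrow (already for $k = 1$, $B = T_{2,5}$, $A = T_{2,3}$, two such correction terms are required, and the corrected $\xi$ turns out to be a genuine cycle with a single incoming horizontal arrow, reflecting $\varepsilon(T_{2,5} \# (-T_{2,3})) = 1$). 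After constructing $\xi$, I would extend to a horizontally simplified basis of $D$ and read off the horizontal arrows at $\xi$.

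The heart of the argument is then a local computation at $\xi$. The horizontal arrows meeting $\xi$ come from two sources: $A^{\ast}$ contributes a single arrow of length $a_1$ pointing \emph{out of} $\xi$ (since $\varepsilon(A^{\ast}) = -1$), while the $k$ copies of $B$ each contribute an arrow of length $b_1$ pointing \emph{into} $\xi$. As these $k$ incoming arrows all have the same length, a change of basis summing their sources collapses them to a single incoming arrow of length $b_1$; this is precisely why the outcome will not depend on $k$. When $b_1 > a_1$, no admissible correction term (all of which are constrained to filtration level at most that of $\xi$, and whose horizontal reach is governed by $b_1$) can cancel the outgoing arrow $\partial\xi$, so $\xi$ must retain a unique outgoing horizontal arrow and $\varepsilon(D) = -1$. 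When $b_1 = a_1$, the leading arrows cancel against one another and the comparison passes to the next pair of entries; here Lemma \ref{staircaselemma} organizes the resulting staircase surgery, and $b_2 < a_2$ again leaves a net outgoing arrow at $\xi$. Throughout, Lemma \ref{archlemma} is used to discard the subleading tails $(a_3,a_4,\dots)$ and $(b_3,b_4,\dots)$, which are dominated and affect neither the Archimedean class nor the sign of $\varepsilon$; this is what collapses the infinite family $kB < A$ to the finite leading-order comparison in the hypothesis.

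The step I expect to be the main obstacle is making the second paragraph precise: producing the corrected cycle $\xi$ explicitly as a vertically distinguished generator of the non-staircase complex $B^{\otimes k}\otimes A^{\ast}$, proving it has exactly the claimed horizontal arrow structure, and showing that the extra copies of $B$ contribute only parallel duplicates of the same short arrow, so that the conclusion holds uniformly in $k$. Once this reduction to the leading one or two steps of each factor is in place, the rest is the bookkeeping above.
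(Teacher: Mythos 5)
You should first note that the paper does not prove this statement at all: it is imported verbatim as \cite[Lemma 4.1]{HHN12}, so a complete argument here would amount to reproducing the Hancock--Hom--Newman proof. Your opening reduction is correct and is the right starting point: for staircases with positive steps both classes are positive, $A \gg B$ is equivalent to $kB < A$ for every $k$, hence to $\varepsilon\bigl(B^{\otimes k} \otimes A^{\ast}\bigr) = -1$ for all $k \ge 1$, and the basis-level characterization of $\varepsilon$ is the natural tool. The difficulty is that everything after this reduction is asserted rather than proved, and the assertions are exactly the content of the lemma.

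Concretely: (i) the claim that for $b_1 > a_1$ ``no admissible correction term can cancel the outgoing arrow'' at the distinguished element is the whole theorem. To conclude $\varepsilon = -1$ you must either produce a horizontally simplified basis whose vertically distinguished element has a unique outgoing horizontal arrow, or rule out every filtered change of basis producing a different local picture; you give no mechanism for either, and in a $k$-fold tensor product the candidate corrections are sums of many elementary tensors whose differentials have horizontal and diagonal components of assorted lengths, not ``governed by $b_1$'' in any way you establish. (ii) Discarding the tails $(a_3,a_4,\dots)$ and $(b_3,b_4,\dots)$ ``by Lemma \ref{archlemma}'' is not legitimate: the tail of a staircase is not a tensor (connected-sum) summand, so Lemma \ref{archlemma}, a statement about $a \pm b$ for group elements with $a \gg b$, does not apply, and Lemma \ref{staircaselemma} splits a staircase only under sandwich hypotheses on the step lengths that are not assumed here. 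The existence of Lemma \ref{Farch3} in this paper --- a full explicit generator computation just to show one special tail can be ignored Archimedean-wise --- is evidence that this step cannot be waved through; and even granting it, the truncated comparison would still have to be proved uniformly in $k$, so the ``collapse to a finite leading-order comparison'' is unsupported. (iii) In the case $b_1 = a_1$, $b_2 < a_2$ nothing in the sketch explains how the vertical lengths enter the horizontal-arrow structure at $\xi$ (in computations of this type, e.g. the proof of Lemma \ref{Farch3}, they enter through second-order basis changes and the resulting diagonal arrows), and invoking Lemma \ref{staircaselemma} to ``organize the staircase surgery'' is not an argument. You flag the second paragraph as the main obstacle; it is in fact the entire lemma, so what you have is a reasonable strategy outline in the style of the paper's Section 4 computations (and of the HHN12 proof it defers to), not a proof.
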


\begin{lemma}[{\cite[Lemma 4.2]{HHN12}}]
\label{Farch2}
Let $a, c > 0$ and $d \geq 0$. If  $d < c$, then 

\begin{equation*}
(1,a,1,a+c) \gg (1, a, 1, a+d) .
\end{equation*}
\end{lemma}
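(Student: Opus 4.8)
The plan is to unwind the definition of domination and reduce the statement to a family of $\varepsilon$-computations, one for each $n$. Write $S_c=(1,a,1,a+c)$ and $S_d=(1,a,1,a+d)$; both are positive staircases, so each has $\varepsilon=+1$ (in the L-space basis the vertically distinguished generator $x_0$ receives a single horizontal arrow from $x_1$) and hence is a positive element of $\F$. Since the ordering on $\F$ is total, $S_c\gg S_d$ is equivalent to the assertion that $n\cdot S_d<S_c$ for every $n\in\N$, where $n\cdot S_d$ denotes the $n$-fold tensor power; the case $n=1$ in particular records that $S_d<S_c$. So everything comes down to showing that the complex $S_c\otimes(-S_d)^{\otimes n}$, to which $\varepsilon$ extends formally, has $\varepsilon=+1$ for all $n$.

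The natural first step is to simplify $n\cdot S_d$ using Lemma \ref{staircaselemma}, and this is exactly where the difficulty lies. Concatenating copies of $(1,a,1,a+d)$ through that lemma would require every entry to be at most $\min\{a,a+d\}=a$, but the entry $a+d$ fails this whenever $d>0$; thus Lemma \ref{staircaselemma} does not apply to self-tensor powers of $S_d$, and there is no off-the-shelf normal form. I would therefore analyze $S_c\otimes(-S_d)^{\otimes n}$ directly: produce a vertically simplified basis and, by \cite{Hom12}, a horizontally simplified one still containing the vertically distinguished generator $x_0$, then show that $x_0$ receives exactly one horizontal arrow and emits none, so $\varepsilon=+1$. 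Lemma \ref{Farch1} cannot detect this, because $S_c$ and $n\cdot S_d$ agree through the first three staircase entries $1,a,1$; the discrepancy appears only at the fourth entry, where $S_c$ contributes $a+c$ against a competitor controlled by $a+d$, and it is the strict inequality $d<c$ that makes the horizontal arrow at $x_0$ point the right way. The delicate, genuinely new point compared with Lemma \ref{Farch1} is that this local picture at $x_0$ --- and hence $\varepsilon$ --- must be shown independent of $n$.

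An alternative I would keep in reserve replaces the infinite family of computations by a single one, using Lemma \ref{archlemma}. If $S_c\otimes(-S_d)$ turns out to be $\varepsilon$-equivalent to a positive staircase $V$ with $V\gg S_d$, then cancelling $(-S_d)\otimes S_d$ gives $S_c\sim_\varepsilon V\otimes S_d$ with $V\gg S_d>0$, and Lemma \ref{archlemma}, together with the sign information in its proof, yields $S_c-n\cdot S_d\sim_{\text{Ar}}V>0$ for every $n$. This converts the troublesome uniformity in $n$ into a bookkeeping problem about a single complex; the price is that one must still understand that one tensor product, whose differential again admits no shortcut.

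In any approach the main obstacle is the same: because Lemma \ref{staircaselemma} fails for powers of $S_d$, one is forced into a hands-on computation of the differential of an iterated tensor product involving a dual staircase, carried far enough to locate the horizontal arrows at the vertically distinguished generator and to verify that this data --- and therefore $\varepsilon$ --- stabilizes in $n$, so that the single hypothesis $d<c$ forces $n\cdot S_d<S_c$ uniformly.
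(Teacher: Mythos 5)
You are proving a statement the paper itself does not prove: Lemma~\ref{Farch2} is imported wholesale from \cite[Lemma 4.2]{HHN12}, so there is no in-paper argument to match, and your attempt has to stand on its own. Your opening reduction is correct: both complexes are positive, so $(1,a,1,a+c)\gg(1,a,1,a+d)$ is equivalent to $n\cdot(1,a,1,a+d)<(1,a,1,a+c)$ for every $n\in\N$, i.e.\ to $\varepsilon\bigl((1,a,1,a+c)\otimes\bigl(n(1,a,1,a+d)\bigr)^*\bigr)=1$ for all $n$. But after that the proposal stops exactly where the proof must begin. You say you would ``produce a vertically simplified basis \dots\ then show that $x_0$ receives exactly one horizontal arrow and emits none,'' and you close by conceding that the hands-on computation of the differential of the iterated tensor product, and the verification that the picture at $x_0$ stabilizes in $n$, remain to be done. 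That computation is the entire content of the lemma; as written this is a strategy outline, not a proof. The reserve route is no better off: it is conditional on $(1,a,1,a+c)\otimes(1,a,1,a+d)^*$ being $\varepsilon$-equivalent to a positive staircase $V$ with $V\gg(1,a,1,a+d)$, which you neither verify nor make plausible (a tensor product with a dual staircase has no reason to be a staircase up to $\sim_\varepsilon$).

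The premise that drives you into this open-ended computation --- that ``there is no off-the-shelf normal form'' for $n\cdot(1,a,1,a+d)$ --- is also false, and it is precisely what makes the problem tractable. You are right that Lemma~\ref{staircaselemma} does not apply to self-tensor powers when $d>0$, but Lemma 3.2 of \cite{HHN12}, which this paper quotes and uses in the proof of Lemma~\ref{Farch3}, gives $2(1,u,1,v)\sim_\varepsilon(1,u,1,v,1,u,1,v)$, and the same result yields the $n$-fold concatenation $(1,a,1,a+d,1,a,1,a+d,\hdots)$ as a normal form for $n(1,a,1,a+d)$. With that in hand the assertion reduces to a single comparison of two explicit staircases for each $n$, and the verification is a basis-change argument of exactly the type carried out in the proof of Lemma~\ref{Farch3}: one adds to the distinguished generator of the tensor product the other generators sitting at the same lattice point, and the strict inequality $d<c$ (playing the role of $u>w$ there) is what makes the one residual arrow diagonal rather than horizontal or vertical, giving $\varepsilon=1$ uniformly in $n$. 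So two things are missing: the concatenation lemma that tames $n\cdot(1,a,1,a+d)$, and the actual filtered change of basis locating the unique horizontal arrow into $x_0$; without them the proposal does not establish the lemma.
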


The proof of the main theorem will also rely on the application of Lemma \ref{Farch2} to staircases of the form 
\begin{equation*}
(1,u,1,v,1,w, \hdots)
\end{equation*}
with  $v \geq u > w \geq 1$. The following lemma justifies this use of Lemma \ref{Farch2}. For the sake of proceeding to the main result, the proof of this lemma will be provided in the fourth section of this paper.

\begin{lemma}
\label{Farch3}
Let $v \geq u > w \geq 1$ be integers. Then the following staircases are Archimedean equivalent
\begin{equation*}
(1,u,1,v) \sim_{\text{Ar}} (1,u,1,v,1,w, \hdots )
\end{equation*}
\end{lemma}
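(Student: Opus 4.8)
The plan is to sandwich the infinite staircase $(1,u,1,v,1,w,\dots)$ between $(1,u,1,v)$ and a bounded multiple of $(1,u,1,v)$ in the Archimedean ordering, using the structural lemmas already available. The key observation is that since $v \geq u > w \geq 1$, the tail $(1,w,\dots)$ of the staircase consists of arrows satisfying the hypotheses of Lemma~\ref{staircaselemma} relative to the initial block $(1,u,1,v)$: the odd-indexed entries $1, w$ of a putative sub-block all satisfy $\max\{1,w\} = w < u \leq \min\{u,v\}$ (comparing against the even entries $u,v$ of the first block), and conversely the odd entries $1$ of the first block are $\leq$ the even entries appearing in the tail provided those even entries are $\geq 1$, which holds for any legitimate staircase. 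I would first make precise exactly which sub-staircase the tail is and verify these inequalities carefully.

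First I would establish the lower bound. Write $X = (1,u,1,v,1,w,\dots)$ and $Y = (1,u,1,v)$. Applying Lemma~\ref{Farch1} with $a_1 = b_1 = 1$ and then comparing the second entries: the staircase $X$ has $a_2 = u$ and $Y$ has $b_2 = u$, so these agree; continuing, $a_3 = b_3 = 1$ and $a_4 = v = b_4$, so $Y$ is an initial segment of $X$. To compare them I would instead observe directly that appending a positive-length tail to a staircase cannot decrease it in the Archimedean order past a bounded factor — more carefully, I expect to show $Y \leq X$ by exhibiting a surjection (up to $\varepsilon$) or by a direct $\varepsilon$-computation on $Y \# -X$, and to show $X \leq 2Y$ or $X \leq nY$ for some fixed $n$ by using Lemma~\ref{staircaselemma} to rewrite a tensor power of $Y$ as a long staircase $(1,u,1,v,1,u,1,v,\dots)$ and then comparing entrywise with $X$ via Lemma~\ref{Farch1}, using that $w < u$ makes the relevant comparison go the right way. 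The point is that $X$ and $nY$ eventually agree on their leading entries in a pattern where $X$'s entries are pointwise "no larger in the sense that matters," so $X \ll nY$ fails and in fact $X \leq nY$.

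The cleanest route, which I would try first, is: use Lemma~\ref{staircaselemma} to show that the full infinite staircase, when truncated, satisfies $(1,u,1,v) + (1,w,\dots) \sim_\varepsilon (1,u,1,v,1,w,\dots) = X$, so that $X = Y + Z$ where $Z = (1,w,\dots)$ is the tail staircase; then by Lemma~\ref{Farch1} (comparing first entries $1=1$, second entries $u$ versus $w$ with $w < u$) we get $Y \gg Z$, i.e. $Y$ dominates $Z$; and finally apply Lemma~\ref{archlemma} with $a = Y$, $b = Z$ to conclude $Y + Z \sim_{\mathrm{Ar}} Y$, which is exactly $X \sim_{\mathrm{Ar}} (1,u,1,v)$. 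This reduces everything to checking the hypotheses of Lemma~\ref{staircaselemma} for the decomposition and the hypotheses of Lemma~\ref{Farch1} for the domination. The main obstacle I anticipate is verifying the inequalities in Lemma~\ref{staircaselemma} when the tail $(1,w,\dots)$ itself continues with further entries beyond $w$ — I need $w$ to bound all the odd entries of the first block (trivially $w \geq 1$, and the first block's odd entries are exactly $1$, so $1 \leq w$, which is the hypothesis) and the even entries of the first block, namely $u$ and $v$, must dominate all odd entries of the tail. This last point requires knowing that every odd entry of the tail is $\leq u$; this is presumably where the specific form of the staircases arising in the paper (all odd entries equal to $1$, all even entries the cable parameters) is used, and I would need to invoke that structural fact rather than prove it in full generality here — which is consistent with the remark that the paper only needs this for the staircases it constructs.
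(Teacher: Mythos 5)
Your ``cleanest route'' rests on the decomposition $(1,u,1,v,1,w,\hdots) \sim_\varepsilon (1,u,1,v) + (1,w,\hdots)$, and that is exactly the step that fails. Lemma \ref{staircaselemma} only yields such a splitting when \emph{every} entry of the second staircase lies between the maximal odd entry and the minimal even entry of the first, i.e.\ here every entry of the tail $(1,w,\hdots)$ would have to lie in $[1,u]$. The statement of Lemma \ref{Farch3} places no hypothesis on the unspecified tail, and the structural fact you hope to invoke is false for the staircases the paper actually applies it to: in the proof of Theorem \ref{mainthm} one takes $A=(1,2n'-1,1,n'(p'-2)-1,1,n'-1,\hdots)$ coming from $CFK^\infty(\K_{n',p'})$, so $u=2n'-1$, while the continuation of that staircase (read off from the Alexander polynomial computation in the proof of Proposition \ref{k-staircase}) contains vertical gaps on the order of $n'(p'-3)-1$, far larger than $u$. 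So Lemma \ref{staircaselemma} does not apply, no $\varepsilon$-level splitting of the staircase is available, and the subsequent appeal to Lemma \ref{Farch1} and Lemma \ref{archlemma} has nothing to act on. (Indeed, if such a splitting were possible the lemma would be a two-line corollary, which is a sign that this cannot be the intended mechanism.) Your fallback of ``comparing entrywise via Lemma \ref{Farch1}'' also does not work: that lemma only separates staircases that differ in their first entry, or agree in the first and differ in the second, whereas $(1,u,1,v,1,u,1,v)$ and $(1,u,1,v,1,w,\hdots)$ agree in their first four entries, so its hypotheses are never met.

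What the paper does instead is a direct computation at the level of $\varepsilon$: it shows $(1,u,1,v,1,w,\hdots) > (1,u,1,v)$ by exhibiting, after an explicit filtered change of basis in $A\otimes B^*$ (with $A$ the long staircase and $B=(1,u,1,v)$), a basis element $a_1'$ with a unique incoming horizontal arrow, no outgoing horizontal arrow, and only a diagonal differential; and it shows $2(1,u,1,v) > (1,u,1,v,1,w,\hdots)$ by first using Lemma 3.2 of \cite{HHN12} to replace $2(1,u,1,v)$ by $(1,u,1,v,1,u,1,v)$ and then running the same kind of basis change in $C\otimes A^*$, where the hypothesis $u>w$ is what makes the resulting arrow diagonal. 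The two strict inequalities sandwich $A$ between $B$ and $2B$ and give the Archimedean equivalence. To repair your proposal you would need to supply these (or equivalent) tensor-product $\varepsilon$ computations; the appeal to Lemmas \ref{staircaselemma}, \ref{Farch1}, and \ref{archlemma} alone cannot do the job.
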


%
%
%
%

\section{Result}
\subsection{Statement of result}
If there exists an order preserving isomorphism between two totally ordered sets $S_1$ and $S_2$ and its inverse is also order preserving, we say $S_1$ and $S_2$ are \emph{order isomorphic}. Order equivalence classes are called \emph{order types}, and the order type of the set of Archimedean equivalence classes of a totally ordered group is called its \emph{coarse order type}. 

A totally ordered group $G$ has a natural filtration indexed by the coarse order type of $G$, where the filtration level corresponding to a given Archimedean equivalence class consists of all of the elements of $G$ with non-strictly smaller Archimedean equivalence class. 

The filtration on $\F$ obtained from its coarse order type can be pulled back to a filtration on $\C$ called the \emph{knot Floer filtration} of $\C$. $\C_{TS}$ inherits this filtration. 

Now we recall the statement of the main theorem first given in the introduction:

\begin{Theorem*}
The indexing set of the knot Floer filtration of $\C_{TS}$ contains a subset that is order isomorphic to $\N \times \N$. We can index the filtration by 
\begin{equation*}
S = \{(i,j) \mid i \in \mathbb{N}, j \in \mathbb{N} \}
\end{equation*}
where S has the lexicographical ordering. Furthermore, each successive quotient is infinite, i.e. for $(i, j), (i',j') \in S$ and $(i, j) < (i',j')$, we have
\begin{equation*}
\mathbb{Z} \subset \F_{(i',j')}/ \F_{(i, j)}. 
\end{equation*}
\end{Theorem*}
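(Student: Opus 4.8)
The plan is to deduce Theorem~\ref{mainthm} from an explicit computation --- deferred to the final section --- of the knot Floer complexes, up to $\varepsilon$-equivalence, of a doubly-indexed family of topologically slice knots, and then to read off the order structure using the Archimedean lemmas of Section~\ref{kfh}.

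First, the reduction. The knot Floer filtration of $\C_{TS}$ is the restriction of the pullback along $\C\to\F$ of the filtration of $\F$ by Archimedean classes, so it suffices to produce topologically slice knots $\{K_{(i,j)}\}_{(i,j)\in\N\times\N}$ with
\[
(i,j) < (i',j') \ \text{in the lexicographic order} \iff CFK^{\infty}(K_{(i,j)}) \ll CFK^{\infty}(K_{(i',j')}) .
\]
Granting this, sending $(i,j)$ to the Archimedean class of $CFK^{\infty}(K_{(i,j)})$ is an order embedding of $(\N\times\N,\ \mathrm{lex})$, whose image is a sub-poset of the indexing set of the filtration; this yields the first two assertions. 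For the last assertion, fix $(i,j)<(i',j')$. The class of $CFK^{\infty}(K_{(i',j')})$ lies in $\F_{(i',j')}$ but not in $\F_{(i,j)}$, since its Archimedean class is the one indexed by $(i',j')$, which strictly dominates the one indexed by $(i,j)$. Because $\F$ is totally ordered, $n\cdot CFK^{\infty}(K_{(i',j')}) \sim_{\text{Ar}} CFK^{\infty}(K_{(i',j')})$ for every $n\ge 1$, so no positive multiple of this class lies in $\F_{(i,j)}$; hence its image in $\F_{(i',j')}/\F_{(i,j)}$ has infinite order and $\mathbb{Z}\subset\F_{(i',j')}/\F_{(i,j)}$.

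Next, the family. Let $D = D_{+}(T_{2,3})$ be the positive untwisted Whitehead double of the right-handed trefoil. Its Alexander polynomial is trivial, so $D$ is topologically slice by Freedman's theorem, and $\varepsilon(D)=1$ by Hom. The knots $K_{(i,j)}$ will be built as iterated cables of $D$; topological sliceness is preserved at each step because a $(p,1)$-cabling has unknotted pattern and winding number $p$, hence multiplies the Alexander polynomial by $\Delta(t^{p})$, so every iterated $(p,1)$-cable of $D$ again has trivial Alexander polynomial. The essential computational input, which I would state first without proof and establish in the final section, is that the $\varepsilon$-equivalence class of $CFK^{\infty}(K_{(i,j)})$ is a staircase complex of the form $(1,u_i,1,v_{i,j},1,\dots)$ to which Lemma~\ref{Farch3} applies, where $u_i$ is strictly increasing in $i$ and $v_{i,j}\ge u_i$ is strictly increasing in $j$; the two indices are recorded by different parts of the iterated cabling data.

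Given this input the order structure is immediate. By Lemma~\ref{Farch3}, $CFK^{\infty}(K_{(i,j)}) \sim_{\text{Ar}} (1,u_i,1,v_{i,j})$, so it is enough to compare these four-term staircases. Since $u_{i'} > u_i$ for $i' > i$, Lemma~\ref{Farch1} gives $(1,u_{i'},1,v_{i',j'}) \gg (1,u_i,1,v_{i,j})$ for all $j,j'$, so every knot at a higher level dominates every knot at a lower level regardless of the second index; and within a fixed level, since $v_{i,j}$ is strictly increasing in $j$ with $v_{i,j}\ge u_i$, Lemma~\ref{Farch2} gives $(1,u_i,1,v_{i,j'}) \gg (1,u_i,1,v_{i,j})$ whenever $j' > j$. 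Together with trichotomy these give exactly the displayed equivalence above. The main obstacle is the computation behind the stated input: since $D$ is not an L-space knot, neither are its iterated cables, so the staircase description of $CFK^{\infty}$ available for L-space knots does not apply, and these complexes must be computed directly (using a cable formula together with Lemma~\ref{staircaselemma} to reduce the resulting tensor products to honest staircases); once that is done, only the bookkeeping above, together with Lemma~\ref{archlemma}, remains.
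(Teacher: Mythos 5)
Your order-theoretic reduction (an order embedding of $\N\times\N$ with the lexicographic order into the set of Archimedean classes realized by topologically slice knots, plus the observation that $n\cdot a\sim_{\text{Ar}}a$ forces infinite order in each successive quotient) is fine, and your use of Lemmas \ref{Farch1}, \ref{Farch2}, and \ref{Farch3} to read off the ordering from staircases $(1,u_i,1,v_{i,j},1,\hdots)$ is exactly how the paper concludes. But the entire content of the theorem lies in the ``computational input'' you defer, and as stated it has a genuine gap: you cannot get both topological sliceness and the required two-parameter staircase classes from iterated cables of $D$ alone. Your sliceness argument forces framing-one patterns, i.e.\ iterated $(p,1)$-cables (indeed $D_{p,q}$ is topologically concordant to $T_{p,q}$, which is not slice unless the framing is $\pm1$). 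Those cables are not $\varepsilon$-equivalent to L-space knots (they are $\varepsilon$-equivalent to $T_{2,3;p,1}$, which fails Hedden's L-space cabling criterion for $p\geq 2$), so no staircase description of their complexes is available, and there is no computation --- in this paper or elsewhere --- supporting your claim that their $\varepsilon$-classes are staircases $(1,u_i,1,v_{i,j},1,\hdots)$ with two independently varying parameters in the required slots and the hypotheses $v\geq u> w\geq 1$ of Lemma \ref{Farch3}. A one-parameter version of your plan is essentially Hom's earlier $\N$ result, but the second parameter is precisely what does not come for free.

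The paper resolves this tension differently: the staircases with two independent parameters come from the cables $\K_{n,p}=D_{p,p+1;n,n(p^2+p)+1}$ with large framings (Proposition \ref{k-staircase}, proved via $\K_{n,p}\sim_{\varepsilon}T_{2,3;p,p+1;n,n(p^2+p)+1}$, an L-space knot), and these knots are \emph{not} topologically slice. Topological sliceness is then restored by connected-summing with $-T_{p',p'+1;n',n'(p'^2+p')+1}$ (topologically concordant to $\K_{n',p'}$), and the extra staircase summands this introduces are killed by an iterative process using the auxiliary family $\S_q$ together with offsetting torus knots of the same topological concordance class (Propositions \ref{sstair}, \ref{np+1stair}, \ref{2p-1stair}, \ref{bigcableprop}), with Lemmas \ref{archlemma} and \ref{Farch1} guaranteeing the leftover terms $B^i$ are dominated by the leading staircase $A$ and the process terminates. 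This correction machinery, absent from your proposal, is the heart of the proof of Theorem \ref{mainthm}; without it (or a genuinely new computation for framing-one iterated cables, which you have not supplied), the proposed family is unsupported.
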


The proof of Theorem 1 consists of constructing a family of knots $\T_{n,p}$ for $n,p \in \mathbb{N}$, where each $\T_{n,p}$ generates $\mathbb{Z}$ in $\F_{(n,p)}/ \F_{(i, j)}$ for all $(i,j) < (n,p)$. The construction of $\T_{n,p}$ begins with a different family $\K_{n,p}$ arising from iterated cables of $D$, the Whitehead double of $T_{2,3}$. $\T_{n,p}$ will be constructed from $\K_{n,p}$ by taking successive connected sums with a third family $\S_p$ and certain torus knots and cables of torus knots. The next subsection will define $\K_{n,p}$ and $\S_p$ and will state the staircase structure of their knot Floer complexes. It will also state the knot Floer complexes of the torus knots and cables of torus knots that will be necessary for the proof of the main theorem. Justification of the propositions of the second subsection will be deferred until Section 4. The third subsection will describe the construction of $\T_{n,p}$ and show that these knots satisfy Theorem \ref{mainthm}. 

%
%

\subsection{Necessary staircase complexes}


The first family of knots we will describe is $\K_{n,p}$. Each $\T_{n,p}$ will be constructed from a certain $\K_{n',p'}$ by taking successive connected sums with knots from the families described later in this subsection. Proofs of all statements in this subsection will be postponed until Section 4. 

\begin{definition}
Let $n \geq 4$ be an even integer and $p \geq 5$ an integer. Then the knot $\K_{n,p}$ is defined as
\begin{equation*}
\K_{n,p} = D_{p,p+1;n,n(p^2+p)+1} .
\end{equation*}
\end{definition}

\begin{proposition}
\label{k-staircase}
Let $n \geq 4$ be an even integer and $p \geq 5$ an integer. Then the knot Floer complex $CFK^{\infty}(\K_{n,p})$ is (modulo $\varepsilon$-equivalence) the following:

\begin{equation*}
CFK^{\infty}(\K_{n,p}) \sim_{\varepsilon} (1, n(p+1) -1 ) + (1, n(p-1)-1) + (1, 2n-1, 1, n(p-2) - 1, 1, n-1, \hdots) .
\end{equation*}
\end{proposition}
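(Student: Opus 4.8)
The plan is to compute $CFK^{\infty}(\K_{n,p})$ by iterating Hedden's cabling formula for L-space knots together with the known knot Floer complex of $D = $ Whitehead double of $T_{2,3}$, then simplifying the resulting tensor product using Lemmas \ref{staircaselemma}--\ref{Farch3}. First I would recall (this is due to Hedden and Hom) that although $D$ is not an L-space knot, $\varepsilon(D) = 1$ and $CFK^\infty(D) \sim_\varepsilon (1,1)$, the staircase of $T_{2,3}$; more precisely one uses the full complex of $D$, which has a box summand that is $\varepsilon$-trivial, so for $\varepsilon$-equivalence purposes $D$ behaves like $T_{2,3}$. Next I would apply Hedden's theorem: since $D$ has genus $1$ and the cabling parameter $p+1$ over $p$ satisfies $(p+1)/p \geq 2g(D) - 1 = 1$, the cable $D_{p,p+1}$ is (up to $\varepsilon$) an L-space knot, so its complex is the staircase determined by its Alexander polynomial, which I would compute from the cabling formula $\Delta_{D_{p,p+1}}(t) = \Delta_D(t^p)\cdot\Delta_{T_{p,p+1}}(t)$.

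The second cabling step is the substantive one: I would take the $(n, n(p^2+p)+1)$-cable of $D_{p,p+1}$. Here one must check the Hedden hypothesis $q/p' \geq 2g' - 1$ where $p' = n$, $q = n(p^2+p)+1$, and $g'$ is the genus of $D_{p,p+1}$; computing $g(D_{p,p+1})$ from the Seifert genus formula for cables and verifying the inequality for $n \geq 4$, $p \geq 5$ is a routine but necessary bound. Granting this, $\K_{n,p}$ is (up to $\varepsilon$) an L-space knot, and its staircase is read off from $\Delta_{\K_{n,p}}(t) = \Delta_{D_{p,p+1}}(t^n)\cdot \Delta_{T_{n, n(p^2+p)+1}}(t)$. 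The arrow lengths of this staircase are then exactly the gaps in the exponent sequence $n_i$ of that Alexander polynomial; I would organize the computation so that the first few gaps come out as $n(p+1) - 1$, then $2$, then $n(p-1) - 1$, then $2$, then $2n - 1$, etc. — interleaving "long" horizontal/vertical arrows coming from the torus-knot factor with "short" arrows of length essentially $1$ (after the change of variable $t \mapsto t^n$, the relevant small gaps rescale, but the $\varepsilon$-equivalence class only sees the pattern).

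Once the staircase $(a_0, a_1, \dots)$ for $\K_{n,p}$ is in hand, the final step is to split it as a tensor product (connected sum) of the three claimed pieces. I would run Lemma \ref{staircaselemma} in reverse: the staircase decomposes as $(1, n(p+1)-1)$, then $(1, n(p-1)-1)$, then a long tail $(1, 2n-1, 1, n(p-2)-1, 1, n-1, \dots)$, provided the relevant $\max/\min$ conditions on odd- versus even-indexed arrow lengths are met. Since the odd-indexed arrows in each factor are all $1$ and the even-indexed arrows are all large (on the order of $n$ times something), these inequalities hold, and repeated application of Lemma \ref{staircaselemma} gives the displayed $\varepsilon$-equivalence. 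The main obstacle I anticipate is bookkeeping: correctly deriving the exponent sequence $n_i$ of the doubly-cabled Alexander polynomial — in particular getting the ordering of the small gaps and the coefficients $n(p\pm1)-1$, $n(p-2)-1$, $2n-1$, $n-1$ exactly right — and simultaneously verifying that every intermediate cable genuinely satisfies Hedden's inequality so that the L-space-knot staircase model is valid at each stage. The $\varepsilon$-equivalence manipulations via the $HHN$ lemmas are comparatively mechanical once the staircase is correct.
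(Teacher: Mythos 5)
There is a genuine gap at the very first cabling step: you propose to read the staircase of $D_{p,p+1}$ (and then of $\K_{n,p}$) off of the Alexander polynomial computed via $\Delta_{D_{p,p+1}}(t) = \Delta_D(t^p)\cdot\Delta_{T_{p,p+1}}(t)$. But $D$ is an untwisted Whitehead double, so $\Delta_D(t) \doteq 1$; your formula therefore gives $\Delta_{\K_{n,p}} = \Delta_{T_{p,p+1;n,n(p^2+p)+1}}$, and the computation collapses to the staircase of the torus-knot cable, i.e.\ to $(1, np-1)+(1,n-1,\hdots)$ as in Proposition \ref{bigcableprop} --- not the claimed answer. (Indeed, the whole construction in the paper hinges on $\K_{n,p}$ and $T_{p,p+1;n,n(p^2+p)+1}$ having the \emph{same} topological concordance class but \emph{different} $\varepsilon$-classes, so any argument that only sees $\Delta_{\K_{n,p}}$ cannot produce the stated staircase.) Relatedly, $D$ is not an L-space knot, so Hedden's criterion cannot be applied to $D$ or its cables, and the phrase ``$D_{p,p+1}$ is (up to $\varepsilon$) an L-space knot, so its complex is the staircase determined by its Alexander polynomial'' conflates two different invariants: the Alexander polynomial is not an invariant of the $\varepsilon$-class, so being $\varepsilon$-equivalent to an L-space knot does not let you read a staircase off the knot's own Alexander polynomial.

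The missing idea, which is how the paper proceeds, is to replace $D$ by $T_{2,3}$ \emph{before} any Alexander-polynomial computation: since $D \sim_\varepsilon T_{2,3}$ and cabling preserves $\varepsilon$-equivalence (Proposition 5.1 of \cite{Hom11}), one has $\K_{n,p} \sim_\varepsilon T_{2,3;p,p+1;n,n(p^2+p)+1}$. This honest iterated torus-knot cable \emph{is} an L-space knot (Hedden's criterion applied first to $T_{2,3}$ with $g=1$, then to $T_{2,3;p,p+1}$ with $g=(p^2+p)/2$), and its staircase is read from $\Delta_{T_{2,3;p,p+1}}(t^n)\cdot\Delta_{T_{n,n(p^2+p)+1}}(t)$ with $\Delta_{T_{2,3;p,p+1}}(t)=\Delta_{T_{2,3}}(t^p)\Delta_{T_{p,p+1}}(t)$ --- it is exactly the factor $\Delta_{T_{2,3}}(t^{np})$ that your version drops. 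Two smaller points: the initial gaps are $1, n(p+1)-1, 1, n(p-1)-1, 1, 2n-1, 1, n(p-2)-1, 1, n-1$, so the short arrows have length $1$, not $2$; and the final splitting via Lemma \ref{staircaselemma} is not automatic from ``odd entries are $1$, even entries are large'' --- one must show that every gap past the first four is at most $n(p-1)-1$ up to the symmetry point, which in the paper requires checking that no positive term of the expanded product is cancelled by a negative one (degrees divisible by $n$ versus not) together with explicit gap bounds.
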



The next family will also arise as a cable of the Whitehead double of the trefoil knot.

\begin{definition}
Let $q \geq 4$ be an integer. Then the knot $\S_q$ is defined to be
\begin{equation*}
\S_q = \begin{cases}
D_{\frac{q}{2}+1, q+1} & \text{q even} \\
D_{\frac{q+1}{2},q+2} & \text{q odd} .
\end{cases}
\end{equation*}
\end{definition}

\begin{proposition}
\label{sstair}
For any integer $q \geq 4$, the knot Floer complex of $\S_q$ is
\begin{equation*}
CFK^{\infty}(\S_q) \sim_{\varepsilon} (1, q) + (2, \hdots ) .
\end{equation*}
\end{proposition}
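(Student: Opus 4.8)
The plan is to identify $\mathcal{S}_q$, up to $\varepsilon$-equivalence, with a cable of a torus knot whose knot Floer complex is a staircase, and then to split that staircase using Lemma \ref{staircaselemma}. By definition $\mathcal{S}_q$ is a cable $D_{p,q'}$ of the Whitehead double $D$ of $T_{2,3}$, with $(p,q') = (\tfrac{q}{2}+1,\,q+1)$ for $q$ even and $(p,q') = (\tfrac{q+1}{2},\,q+2)$ for $q$ odd. The input I would use is Hedden's explicit computation of $CFK^\infty(D)$: as a complex over $\mathbb{F}[U,U^{-1}]$ it is the direct sum of the staircase $(1,1)$ and finitely many square (box) complexes, all of which are acyclic. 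Feeding this into the bordered-Floer description of the knot complex of a cable (see \cite{LOT08,Hed09}), the acyclic summands of $CFK^\infty(D)$ produce only acyclic summands of $CFK^\infty(D_{p,q'})$, so that
\[
CFK^\infty(\mathcal{S}_q) \;=\; CFK^\infty(D_{p,q'}) \;\sim_\varepsilon\; CFK^\infty\bigl((T_{2,3})_{p,q'}\bigr).
\]

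Next I would verify that $(T_{2,3})_{p,q'}$ is an L-space knot. For every $q\ge 4$ one checks $q'/p > 1 = 2g(T_{2,3})-1$ (in the even case $q'/p = 2-\tfrac{2}{q+2}$, in the odd case $q'/p = 2+\tfrac{2}{q+1}$), so Hedden's criterion \cite{Hed09} applies since $T_{2,3}$ is an L-space knot. Hence, by \cite{OS05} as recalled in Section \ref{lspacesub}, $CFK^\infty\bigl((T_{2,3})_{p,q'}\bigr)$ is the staircase complex read off from the exponent gaps of
\[
\Delta_{(T_{2,3})_{p,q'}}(t) \;=\; \Delta_{T_{2,3}}(t^{\,p})\cdot\Delta_{T_{p,q'}}(t) \;=\; \bigl(t^{p}-1+t^{-p}\bigr)\,\Delta_{T_{p,q'}}(t).
\]
The core of the argument, and the step I expect to be the main obstacle, is a precise analysis of the top exponents of this product. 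Writing $g$ for the genus of $(T_{2,3})_{p,q'}$, one must show that the two leading exponents are $g$ and $g-1$ (so that the first gap is $1$), that after a string of cancellations between $t^{p}\Delta_{T_{p,q'}}(t)$ and $\Delta_{T_{p,q'}}(t)$ the next surviving exponent is exactly $g-1-q$ (so that the second gap is $q$), and that every subsequent gap lies in $[1,q]$. I would carry this out by expressing the exponents of $\Delta_{T_{p,q'}}(t)$ through the numerical semigroup $\langle p,q'\rangle$ and handling the two parity cases separately, since $(p,q')$ differs in the two cases.

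Granting this, the resulting staircase has the form $(1,q,b_1,\dots,b_n)$ with each $b_j\in[1,q]$. For the length-two prefix $(1,q)$ we have $\max\{a_i\mid i\text{ odd}\}=1$ and $\min\{a_i\mid i\text{ even}\}=q$, so the hypotheses of Lemma \ref{staircaselemma} are met and
\[
(1,q)+(b_1,\dots,b_n)\;\sim_\varepsilon\;(1,q,b_1,\dots,b_n)\;=\;CFK^\infty\bigl((T_{2,3})_{p,q'}\bigr)\;\sim_\varepsilon\;CFK^\infty(\mathcal{S}_q),
\]
which is precisely the claim $CFK^\infty(\mathcal{S}_q)\sim_\varepsilon (1,q)+(2,\dots)$ (here $b_1=2$, and the remaining entries are suppressed as in the paper's notational convention). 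If one prefers to avoid the reduction to torus cables in the first paragraph, one may instead apply the cabling formula directly to Hedden's $CFK^\infty(D)$ and discard the acyclic part only at the end; this amounts to the same computation.
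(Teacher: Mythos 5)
Your overall route is the same as the paper's: replace $\S_q$ up to $\varepsilon$-equivalence by the corresponding cable of $T_{2,3}$, check that this cable is an L-space knot via Hedden's criterion, read the staircase off the exponent gaps of $\Delta_{T_{2,3}}(t^{p})\,\Delta_{T_{p,q'}}(t)$, and split it with Lemma \ref{staircaselemma}. One remark on the first step: the paper gets the reduction by citing that $D \sim_{\varepsilon} T_{2,3}$ (Hedden) together with Proposition 5.1 of \cite{Hom11}, which says that cabling preserves $\varepsilon$-equivalence; your alternative argument, that the acyclic box summands of $CFK^{\infty}(D)$ contribute only acyclic summands to the cable via the bordered cabling formula, is plausible but is itself a nontrivial claim that you neither prove nor cite, so you should either justify it or simply quote Hom's result.

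The genuine gap is that the heart of the proposition is only announced, not carried out. You state that one ``must show'' that the first gap is $1$, that the second gap is exactly $q$, that the next entry is $2$, and that every subsequent gap lies in $[1,q]$, and you propose to verify this via the numerical semigroup $\langle p, q'\rangle$ --- but none of this is actually done, and this exponent analysis is essentially all of the content of the proposition once the reduction to the torus cable is made. The paper performs exactly this step explicitly: for $q$ even the companion is $T_{p',2p'-1}$ with $p'=\tfrac{q}{2}+1$, whose Alexander polynomial is Lemma \ref{2p-1poly}, and for $q$ odd it is $T_{p',2p'+1}$ with $p'=\tfrac{q+1}{2}$, an instance of Lemma \ref{np+1poly} with $n=2$; multiplying by $1-t^{p'}+t^{2p'}$ and telescoping yields closed-form expressions (e.g.\ (\ref{Sparta}) in the even case) from which one reads off the staircases $(1,q,2,\tfrac{q}{2},1,\tfrac{q}{2}-2,\hdots)$ and $(1,q,2,\tfrac{q+1}{2}-2,2,\hdots)$, and the bound on all later entries needed to apply Lemma \ref{staircaselemma} is visible directly from those formulas. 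A semigroup argument could likely be made to work, but as written your proof defers precisely the step where all the work lies, so it does not yet establish $CFK^{\infty}(\S_q)\sim_{\varepsilon}(1,q)+(2,\hdots)$.
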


The two following propositions describe families of torus knots that contain knots that are topologically concordant (but not smoothly concordant) to each member of the family $\S_p$. 

\begin{proposition}
\label{np+1stair}
Up to $\varepsilon$-equivalence, the knot Floer complex of the $(p,n p+1)$ torus knot is 
\begin{equation*}
CFK^{\infty}(T_{p,n p+1}) \sim_{\varepsilon} CFK^{\infty}(T_{p,p+1}) ^{\otimes n} ,
\end{equation*}
where $CFK^{\infty}(T_{p,p+1})$ is given by
\begin{equation*}
CFK^{\infty}(T_{p,p+1}) \sim_{\varepsilon} (1,p-1) + (2, p-2, \hdots) .
\end{equation*}
\end{proposition}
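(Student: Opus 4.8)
The plan is to separate the statement into two independent assertions: first, that $CFK^\infty(T_{p,np+1}) \sim_\varepsilon CFK^\infty(T_{p,p+1})^{\otimes n}$, and second, that $CFK^\infty(T_{p,p+1}) \sim_\varepsilon (1,p-1) + (2,p-2,\hdots)$. For the second assertion, I would begin from the known fact that $T_{p,p+1}$ is an L-space knot (established in the L-space subsection above, since it is the $(p,p+1)$-cable of the unknot), so its knot Floer complex is a staircase complex determined by its Alexander polynomial. I would compute $\Delta_{T_{p,p+1}}(t)$ from the standard formula $\Delta_{T_{a,b}}(t) = \frac{(t^{ab}-1)(t-1)}{(t^a-1)(t^b-1)}$ with $a=p$, $b=p+1$, read off the exponent sequence $n_0 < n_1 < \cdots < n_M$, and thereby extract the staircase step-length sequence $(a_0,a_1,a_2,\dots)$. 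The claim is that this sequence starts $(1, p-1, 2, p-2, \dots)$; the precise continuation is irrelevant up to $\varepsilon$-equivalence, which is why only the first four entries are written. I would verify the first few gaps in $\Delta_{T_{p,p+1}}$ explicitly (the semigroup generated by $p$ and $p+1$ has gaps, and the relevant low-order terms give exponents $0,1,p+1,p+2,\dots$ with appropriate signs), confirming the horizontal-then-vertical arrow lengths $1, p-1$ and then $2, p-2$.

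For the first assertion, I would again use that both $T_{p,np+1}$ and $T_{p,p+1}$ are L-space knots, so both sides are staircase complexes, and appeal to Lemma \ref{staircaselemma} to identify the $n$-fold tensor product $CFK^\infty(T_{p,p+1})^{\otimes n}$ with a single staircase by concatenation. Concretely, writing $CFK^\infty(T_{p,p+1}) \sim_\varepsilon (1, p-1, 2, p-2, \dots)$, one checks the hypotheses of Lemma \ref{staircaselemma} (the odd-indexed entries, i.e.\ the vertical-arrow lengths, are bounded above by the even-indexed entries, the horizontal-arrow lengths, in the relevant range) so that the tensor product concatenates up to $\varepsilon$-equivalence. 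Separately, I would compute the staircase of $T_{p,np+1}$ directly from its Alexander polynomial, again via the semigroup of $p$ and $np+1$, and match the two step-length sequences (at least far enough to pin down the $\varepsilon$-equivalence class). Alternatively — and more cleanly — I would invoke Hedden's cabling result from the L-space subsection: $T_{p,np+1}$ arises as an iterated cable of the unknot, and its staircase can be computed recursively; the upshot is the same concatenated sequence.

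The main obstacle is the bookkeeping in comparing the two staircase step-length sequences arising from the two Alexander polynomials: one must show that the sequence coming from the semigroup of $\{p, np+1\}$ agrees, up to the tail-truncation that $\varepsilon$-equivalence permits, with the $n$-fold concatenation of the sequence coming from the semigroup of $\{p, p+1\}$. This is a purely combinatorial (though slightly intricate) identity about gap sequences of numerical semigroups; the key simplification is that Lemma \ref{staircaselemma} lets us treat tensor products of "flat enough" staircases as concatenations, so the whole comparison reduces to matching finitely many leading entries rather than the entire (genus-dependent) sequences. I would organize this step by first handling $n=1$ as the base case (which is exactly the second assertion), then arguing that the leading block of the $T_{p,np+1}$ staircase reproduces $n$ copies of the $(1,p-1,2,p-2,\dots)$ block, invoking Lemma \ref{staircaselemma} to collapse the tensor side to match.
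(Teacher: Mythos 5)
Your $n=1$ argument is essentially the paper's: read the staircase of $T_{p,p+1}$ off its Alexander polynomial and split off $(1,p-1)$ with Lemma \ref{staircaselemma}. (Minor slip: the low-order exponents are $0,1,p,p+2,\hdots$, not $0,1,p+1,p+2,\hdots$, though the gaps $1,p-1,2,p-2$ you record are right.) The $n>1$ step, however, has a genuine gap, in two places. First, you cannot collapse $CFK^{\infty}(T_{p,p+1})^{\otimes n}$ to a single staircase by concatenating whole copies of $(1,p-1,2,p-2,\hdots,\lfloor p/2\rfloor,\lceil p/2\rceil)$: Lemma \ref{staircaselemma} requires every entry of the second factor to lie between the largest horizontal length and the smallest vertical length of the first factor, i.e.\ in $[\lfloor p/2\rfloor,\lceil p/2\rceil]$, and the entries $1$ and $p-1$ of the next copy violate this once $p\geq 4$ (your parenthetical check of the hypothesis, with vertical lengths bounded above by horizontal ones, is also reversed for this staircase, whose vertical lengths $p-1,p-2,\hdots$ are at least its horizontal lengths $1,2,\hdots$). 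Second, the combinatorial identity you plan to verify is false: the staircase of $T_{p,np+1}$ is not the $n$-fold concatenation of the $T_{p,p+1}$ staircase. The explicit Alexander polynomial (Lemma \ref{np+1poly}) shows its steps come grouped as $n$ consecutive pairs $(1,p-1)$, then $n$ consecutive pairs $(2,p-2)$, and so on; for example $T_{5,11}$ gives $(1,4,1,4,2,3,2,3)$ up to the symmetry point, not $(1,4,2,3,1,4,2,3)$.

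The paper's proof avoids both problems by running Lemma \ref{staircaselemma} in the decomposition direction on pieces that do satisfy its hypotheses: the staircase of $T_{p,np+1}$ splits as $(1,p-1,\hdots,1,p-1)+(2,p-2,\hdots,2,p-2)+\hdots$, each block of $n$ equal pairs splits further into $n(r,p-r)$, and each factor $(1,p-1,2,p-2,\hdots)$ of the tensor power splits as $(1,p-1)+(2,p-2)+\hdots$; regrouping the resulting two-step staircases identifies the two sides. In each of these splittings the tail's entries do lie between the maximal horizontal and minimal vertical lengths of the preceding block, so the lemma applies. Finally, note that the ellipsis in $(2,p-2,\hdots)$ is notational shorthand for the actual remainder of the staircase, not permission to discard it: the $\varepsilon$-equivalence class does depend on the continuation (compare Lemma \ref{Farch2}), so the proposition is a statement about the full complexes, even if only the leading entries matter in the later Archimedean arguments.
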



\begin{proposition}
\label{2p-1stair}
Let $p \geq 2$ be an integer. The knot Floer homology complex of the $(p, 2p-1)$ torus knot is
\begin{equation*}
CFK^{\infty}(T_{p,2p-1}) \sim_{\varepsilon} (1,p-1) + (1,p-2) + (2, \hdots ) .
\end{equation*}
\end{proposition}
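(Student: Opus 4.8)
The strategy is to recognize $T_{p,2p-1}$ as an L-space knot, read off its staircase complex from the Alexander polynomial, and then apply Lemma~\ref{staircaselemma} to split off the claimed summands. For the first point, $T_{p,2p-1}$ is the $(p,2p-1)$-cable of the unknot $U$, and $\tfrac{2p-1}{p}=2-\tfrac1p\ge -1=2g(U)-1$, so Hedden's cabling criterion (quoted in Section~\ref{lspacesub}) shows $T_{p,2p-1}$ is an L-space knot; alternatively, $p(2p-1)\pm1$ surgery on it yields a lens space. By the Ozsv\'ath--Szab\'o structure theorem for L-space knots (recalled in Section~\ref{lspacesub}), $CFK^\infty(T_{p,2p-1})$ is therefore the staircase complex whose consecutive step lengths are the gaps $n_i-n_{i-1}$ in the exponent sequence of $\Delta_{T_{p,2p-1}}(t)=\sum_i(-1)^it^{n_i}$.

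The second step is to determine this exponent sequence. I would do so from the numerical semigroup $\langle p,2p-1\rangle=\{ip+j(2p-1):i,j\ge0\}$, using the identity $\Delta_{T_{a,b}}(t)=1+(t-1)\sum_{g\notin\langle a,b\rangle}t^g$ with the sum over the finitely many gaps, or equivalently by dividing $(t^{p(2p-1)}-1)(t-1)$ by $(t^p-1)(t^{2p-1}-1)$. The semigroup $\langle p,2p-1\rangle$ is transparent because $2p-1\equiv-1\pmod p$: adding a copy of $2p-1$ drops the residue mod $p$ by one and raises the quotient by two, so one can list, level by level, which residues have been realized. Carrying this out -- and sanity-checking against $p=3,4,5$, where the staircases are $(1,2,1)$, $(1,3,1,2,2)$, $(1,4,1,3,2,3,2)$ -- shows that the step-length sequence of $CFK^\infty(T_{p,2p-1})$ begins $(1,\,p-1,\,1,\,p-2,\,2,\,\dots)$, that every entry after the first lies in $[1,p-1]$, and that every entry from the fifth on lies in $[1,p-2]$.

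Now write the staircase as $(1,p-1)\frown(1,p-2)\frown R$, where $R=(2,\dots)$ is the remaining sequence, beginning with a step of length $2$. The front block $(1,p-1,1,p-2)$ has both odd-position entries equal to $1$ and its even-position entries equal to $p-1$ and $p-2$. Since all subsequent entries lie in $[1,p-1]$, Lemma~\ref{staircaselemma} lets us peel off $(1,p-1)$; and since the entries of $R$ lie in $[1,p-2]$, a second application of Lemma~\ref{staircaselemma} to the remaining staircase $(1,p-2)\frown R$ peels off $(1,p-2)$. This yields
\begin{equation*}
CFK^\infty(T_{p,2p-1})\sim_\varepsilon(1,p-1)+(1,p-2)+(2,\dots),
\end{equation*}
as claimed.

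The main obstacle is the second step: pinning down the precise step-length sequence of the staircase uniformly in $p$, and in particular establishing the two size inequalities (all entries after the first are $\le p-1$; all entries from the fifth on are $\le p-2$) that are exactly what Lemma~\ref{staircaselemma} needs in the third step. The semigroup bookkeeping is elementary but must be organized carefully, and the small cases $p=2,3$ ought to be treated by hand, since there the staircase is too short for the stated three-term decomposition to be read off verbatim and the degenerate summands must be interpreted directly.
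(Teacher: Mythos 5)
Your proposal is correct and follows essentially the same route as the paper: the paper's Lemma \ref{2p-1poly} obtains the exponent sequence via an explicit closed form for $\Delta_{T_{p,2p-1}}(t)$ proved by telescoping (rather than via the gap set of the semigroup $\langle p,2p-1\rangle$), reads off the staircase $(1,p-1,1,p-2,2,p-2,2,p-3,\hdots)$ from the successive degree gaps, and then applies Lemma \ref{staircaselemma} exactly as you do, so the semigroup bookkeeping you defer is precisely the content of that lemma and the two size inequalities you isolate are exactly what its formula delivers. Your caution about $p=2,3$ is apt --- the paper's proof tacitly assumes $p$ is large enough for the displayed pattern, which is all that is used in the construction of $\T_{n,p}$.
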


We now describe the knot Floer complex of one final family of knots before moving on to the proof of the main theorem. This family of knots contains knots that are topologically concordant (but not smoothly concordant) to each member of $\K_{n,p}$. 

\begin{proposition}
\label{bigcableprop}
The knot Floer homology complex of the $(n,n(p^2+p)+1)$ cable of the $(p,p+1)$ torus knot is
\begin{equation*}
CFK^{\infty}(T_{p,p+1;n,n(p^2+p)+1}) \sim_{\varepsilon} (1, n p -1) + (1, n-1, \hdots ) . 
\end{equation*}
\end{proposition}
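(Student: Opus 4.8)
The plan is to compute $CFK^\infty(T_{p,p+1;n,n(p^2+p)+1})$ by combining Hedden's cabling formula with the staircase form of the knot Floer complex of an L-space knot, then simplify the resulting staircase using Lemma \ref{staircaselemma}. First I would check the hypothesis in Hedden's theorem from \cite{Hed09}: $T_{p,p+1}$ is an L-space knot with genus $g = g(T_{p,p+1}) = p(p-1)/2$, and the cabling parameters are $(r,s) = (n, n(p^2+p)+1)$, so the required inequality $s/r \geq 2g(T_{p,p+1}) - 1$ reads $p^2 + p + 1/n \geq p^2 - p - 1$, which holds comfortably. Hence $T_{p,p+1;n,n(p^2+p)+1}$ is itself an L-space knot, and its knot Floer complex is a staircase determined entirely by its Alexander polynomial. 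So the real content is identifying that Alexander polynomial — equivalently, the exponent sequence $n_0 < n_1 < \cdots < n_M$ — and then reading off the staircase and simplifying it up to $\varepsilon$-equivalence.

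The key computational step is to apply the cabling formula for the Alexander polynomial: $\Delta_{K_{r,s}}(t) = \Delta_{T_{s,r}}(t)\cdot \Delta_K(t^r)$ where here $K = T_{p,p+1}$. Using $\Delta_{T_{p,p+1}}(t)$ (whose exponents form the classical ``staircase'' pattern with gaps alternating between small jumps) and the torus-knot factor $\Delta_{T_{n,\,n(p^2+p)+1}}(t)$, I would extract the exponent sequence of $\Delta_{T_{p,p+1;n,n(p^2+p)+1}}(t)$. Translating to staircase notation $(a_0, a_1, a_2, \ldots)$, where even-indexed entries are horizontal arrow lengths and odd-indexed entries are vertical arrow lengths, I expect the first few arrows to be governed by the smallest gaps coming from the cable structure: a first pair giving $(1, np-1)$ and a second piece beginning $(1, n-1, \ldots)$. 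Once the staircase is in hand, Lemma \ref{staircaselemma} lets me split off the initial $(1, np-1)$ block as a tensor summand: one checks that $1 \leq np - 1$ fits between the odd-index maxima ($=1$) and even-index minima of the remaining tail, so
\begin{equation*}
(1, np-1, 1, n-1, \ldots) \sim_\varepsilon (1, np-1) + (1, n-1, \ldots),
\end{equation*}
yielding the claimed form.

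The main obstacle I anticipate is bookkeeping the exponents of the Alexander polynomial of the iterated cable correctly — in particular, making sure the gap that becomes the second ``$1$'' in the staircase really is $1$, and that the subsequent vertical arrow is $n - 1$ rather than some other multiple of $n$. This requires carefully tracking how the $t \mapsto t^n$ substitution interacts with the torus-knot factor $\Delta_{T_{n, n(p^2+p)+1}}$, whose exponents are themselves spread out on the order of $n(p^2+p)$, so that the relevant small gaps near the ``bottom'' of the staircase come from the $\Delta_{T_{p,p+1}}(t^n)$ factor shifted appropriately. A secondary point to verify is that the hypotheses of Lemma \ref{staircaselemma} are genuinely met for the particular numerical values arising here (i.e., that $np - 1$ does not exceed the relevant even-index entry of the tail), which should hold for $p \geq 2$ and $n \geq 1$ but needs to be stated; since the full verification is a routine polynomial computation, I would carry it out in detail in Section 4 and here only indicate the structure of the argument.
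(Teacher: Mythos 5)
Your outline follows the paper's proof exactly: check via Hedden's criterion that $T_{p,p+1;n,n(p^2+p)+1}$ is an L-space knot (your genus computation $g(T_{p,p+1})=p(p-1)/2$ and the inequality $p^2+p+\tfrac{1}{n}\geq p^2-p-1$ are correct), compute its Alexander polynomial from the cabling formula $\Delta_{T_{p,p+1;n,n(p^2+p)+1}}(t)=\Delta_{T_{n,n(p^2+p)+1}}(t)\cdot\Delta_{T_{p,p+1}}(t^n)$, read off the staircase $(1,np-1,1,n-1,\hdots)$ from the exponent gaps, and then split off the leading block using Lemma \ref{staircaselemma}. So there is no disagreement about strategy.

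Two substantive problems remain, however. First, you state the hypotheses of Lemma \ref{staircaselemma} backwards. Since $(1,np-1)$ is the \emph{leading} block of the concatenation, it plays the role of $(a_1,\hdots,a_m)$, and what must be verified is that every entry $b_j$ of the tail $(1,n-1,\hdots)$ satisfies $1\leq b_j\leq np-1$; your condition that ``$np-1$ does not exceed the relevant even-index entry of the tail'' is the reverse inequality and is false, since the tail's entries are on the order of $n-1$ and the other later gaps, all of which are smaller than $np$. Second, the verification you defer as a routine polynomial computation is precisely where the content of the proposition lies, and it requires more than bookkeeping: after expanding the product one must show (i) that no surviving positive term is cancelled by a negative one --- the paper does this by observing that every monomial in the telescoped positive part has degree divisible by $n$, while every monomial coming from the negative factor $-\sum_{j}\sum_{i}t^{in+j(n(p^2+p)+1)+1}$ has degree congruent to $j+1\not\equiv 0 \pmod n$ --- and (ii) that beyond the initial exponents $0,1,np,np+1,np+n$ every gap between consecutive surviving positive exponents up to the symmetry point is at most $np$, so that with the interleaving negative terms all later staircase entries are at most $np-1$, which is exactly the (correctly oriented) hypothesis of Lemma \ref{staircaselemma}. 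Without (i) and (ii) the claimed staircase shape, and hence the splitting, is not established.
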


%
%

\subsection{Construction of $\T_{n,p}$} In this subsection, we will construct the family $\T_{n,p}$ from $\K_{n,p}$ and show that $\T_{n,p}$ satisfy the conditions of the main theorem.

Let $n, p \in \N$. Define $n' = 2(n+2)$, $p' = p+5$ and let \begin{equation*}
\T_{n,p}^0 = \K_{n',p'} \# -T_{p', p'+1; n',n'(p'^2+p')+1} .
\end{equation*}
$\T_{n,p}^0$ is topologically slice because $\K_{n',p'}$ is topologically concordant to the given torus knot (since $D$ is topologically concordant to the unknot $U$, so $D_{p',p'+1}$ is topologically concordant to $U_{p',p'+1} = T_{p',p'+1}$, hence their $(n',n'(p'^2+p')+1)$ cables are topologically concordant). The knot Floer complex of $\T_{n,p}^0$ is, by the preceding subsection, $\varepsilon$-equivalent to
\begin{equation*}
(1, n'(p'+1) - 1) + (1, n'(p'-1) - 1)  - (1, n'p'-1)  + (1, 2n' - 1, 1, n'(p'-2) -1, \hdots )- (1, n'-1, \hdots) .
\end{equation*}

Notationally, let $A = (1, 2n' - 1, 1, n'(p'-2) -1, \hdots)$ and $B^0 = -(1, n'-1, \hdots  )$. The goal is to create a knot that is topologically slice and Archimedean equivalent to $A$. By Lemma \ref{archlemma}, only the three terms
\begin{equation*}
(1, n'(p'+1) - 1) \text{, } (1, n'(p'-1) - 1) \text{, and } -(1, n'p'-1)
\end{equation*}
in the knot Floer complex of $\T_{n,p}^0$ obstruct $\T_{n,p}^0$ from Archimedean equivalence to $A$ since by Lemma \ref{Farch1}, $A \gg B^0$. We will remove these obstructions by creating a sequence of topologically slice knots $\T_{n,p}^i$ such that the Archimedean equivalence classes of the obstructions are strictly smaller at each $i$ than in all previous $i$. More precisely, for each $i \geq 1$,  $\T_{n,p}^i$ will have a knot Floer complex of the form 
\begin{equation}
\label{desired_form}
\sum_j (1, a_{i,j}) + A + B^i
\end{equation}
where $A \gg B^i$ and each $a_{i,j}$ satisfies $a_{i-1,j'} > a_{i,j} \geq 2 n'-1$ for some $a_{i-1,j'}$. Now let
\begin{align*}
\T_{n,p}^1 & = \T_{n,p}^0 \# -\S_{n'(p'+1) -1} \# - \S_{n'(p'-1)-1} \# \S_{n'p'-1} \\
& \# T_{(n+2)(p+6),2(n+2)(p+6) +1} \# T_{(n+2)(p+4),2(n+2)(p+4)+1} \# - T_{(n+2)(p+5), 2(n+2)(p+5)+1}.
\end{align*}
This is topologically slice since $\T_{n,p}^0$ is topologically slice and each member of the family $\S$ has an offsetting torus knot of the same topological concordance class. Its knot Floer complex is, by the previous subsection,
\begin{equation*}
CFK^{\infty}(\T_{n,p}^1) \sim_{\varepsilon} 2 (1, (n+2)(p+6) -1) + 2 (1,(n+2)(p+4)-1) - 2(1, (n+2)(p+5)-1) + A + B^1
\end{equation*}
where
\begin{equation*}
B^1 = B^0 + 2 (2, (n+2)(p+6) -2) + 2 (2,(n+2)(p+4)-2) - 2(2, (n+2)(p+5)-2)+ (2, \hdots ) + (2, \hdots) - (2, \hdots ).
\end{equation*}

Each $\T_{n,p}^{i+1}$ is constructed from $\T_{n,p}^{i}$ similarly.  Every remaining staircase summand in the knot Floer complex of $\T_{n,p}^{i}$ that is not part of $A$ or $B^i$ is eliminated by adding an appropriate $\S_q$ or $-\S_q$. So that the resulting knot remains topologically slice, for each of these $\S_q$ a torus knot that is topologically concordant to $\S_q$ is also added with opposite sign. All of the resulting summands $(a_0, a_1, \hdots )$ with $(a_0, a_1, \hdots ) \ll A$ are added to $B^{i}$ to form $B^{i+1}$. 

By Propositions \ref{sstair}, \ref{np+1stair}, and \ref{2p-1stair}, for each $\pm (1, q)$ summand in $CFK^{\infty}(\T_{n,p}^i)$, this process results in a sum of staircases of the form 
\begin{equation*}
\mp (2,\hdots) \pm \left( (1,q_1) + (2,\hdots) \right) \pm  \left((1,q_2) + (2,\hdots) \right) 
\end{equation*}
being added to the complex of $\T_{n,p}^{i+1}$, where $q_1, q_2 < q$. The first term is the difference between $(1,q)$ and $\S_q$ and the remaining terms come from the offsetting torus knot. Since $(2, \hdots) \ll A$, these terms are absorbed into $B^{i+1}$. If $q_1$ or $q_2 < 2 n'-1$, then by by Lemma \ref{Farch1}, the corresponding staircase is dominated by $A$, so it is absorbed into $B^{i+1}$ as well. Then each $CFK^{\infty}(\T_{n,p}^{i})$ for $i \geq 1$ is of the form described in (\ref{desired_form}). Hence there exists some integer $m$ such that 

\begin{equation*}
CFK^{\infty}(\T_{n,p}^m) \sim_{\varepsilon} A + B^m .
\end{equation*}

\begin{definition}
Let $n, p \in \N$, and let $m'$ be the smallest such $m$ as above for the family $\T_{n,p}^{i}$. Then we define $\T_{n,p}$ as
\begin{equation*}
\T_{n,p} = \T_{n,p}^{m'} .
\end{equation*}
\end{definition}

\begin{proof}[Proof of Theorem \ref{mainthm}]
By Lemmas \ref{archlemma} and \ref{Farch1},
\begin{equation*}
CFK^{\infty}(\T_{n,p}) \sim_{\text{Ar}} (1, 2n'-1, 1, n'(p'-2)-1, 1,n'-1,\hdots) ,
\end{equation*}
where $n', p'$ are defined as in the construction above. So by Lemma \ref{Farch1} 
\begin{equation*}
\T_{0,0} \ll \T_{1,0} \ll \hdots \ll \T_{n,0} \ll \hdots ,
\end{equation*}
and since by Lemma \ref{Farch3}
\begin{equation*}
(1, 2n'-1, 1, n'(p'-2) -1, 1, n'-1, \hdots) \sim_{\text{Ar}} (1, 2n'-1, 1, n'(p'-2) ) ,
\end{equation*}
then by Lemma \ref{Farch2}, for any $n, p \in \N$,
\begin{equation*}
\T_{n,0} \ll \T_{n, 1} \ll \hdots \ll \T_{n,p} \ll \hdots  \ll T_{n+1,0} ,
\end{equation*}
establishing the main theorem. 
\end{proof}

\section{Calculations and proof of propositions}

The proof of the main theorem relied on several propositions about the structure of the knot Floer complexes of certain families of iterated cable knots and their connected sums. The purpose of this section is to provide proof of these propositions.

\subsection{Lemma \ref{Farch3}}
The first statement requiring proof was Lemma \ref{Farch3}, which established $\varepsilon$-equivalence of staircases of the form
\begin{equation*}
(1,u,1,v)
\end{equation*}
and those of the form
\begin{equation*}
(1,u,1,v,1,w, \hdots)
\end{equation*}
for $v \geq u > w \geq 1$.

\begin{proof}[Proof of Lemma \ref{Farch3}]
First, we will show that
\begin{equation*}
(1,u,1,v,1,w, \hdots ) > (1,u,1,v)
\end{equation*}
i.e. that
\begin{equation*}
\varepsilon \left( (1,u,1,v,1,w, \hdots ) + (1,u,1,v)^{*} \right) = 1.
\end{equation*}

Let $A = (1,u,1,v,1,w, \hdots )$ and $B= (1,u,1,v)$. Label the first 7 generators of $A$
\begin{equation*}
a,b,c,d,e,f, \text{ and } g ,
\end{equation*}
i.e. $b$ is the generator with $\partial b = a + c$ where the arrow from $b$ to $a$ points to the left and has length 1 and the arrow from $b$ to $c$ points down and has length $u$, $d$ is the generator with $\partial d = c+ e$, etc. 

Label the last six generators of $B^*$
\begin{equation*}
x_6 \text{, } x_5\text{, } x_4\text{, } x_3\text{, } x_2\text{, and } x_1,
\end{equation*}
where $x_1$ is the last generator, $x_2$ is the second-to-last generator, etc. Note that because of symmetry of staircase complexes, the length of the horizontal arrows from to $x_1$ to $x_2$ and from $x_3$ to $x_4$ are 1 and the length of the vertical arrows from $x_3$ to $x_2$ and $x_5$ to $x_4$ are $u$ and $v$ respectively. 

Consider the tensor product 
\begin{equation*}
A \otimes B^* .
\end{equation*}
Denote by $a_i$ the element $a \otimes x_i$, by $b_i$ the element $b \otimes x_i$, etc. By the basis definition of $\varepsilon$, it suffices to find a basis for $A \otimes B^*$ with an element $u_0$ such that $u_0$ has a single incoming horizontal arrow and no outgoing horizontal arrows, as well as no incoming or outgoing vertical arrows.

Let $(x,y)$ denote the position in the lattice of $a_1$. Note that $b_2$, $c_3$, $d_4$, and $e_5$ are also at position $(x,y)$. The boundary maps of these elements are given by
\begin{equation*}
\partial e_5 = e_4 + e_6 \text{, }
\partial d_4 = e_4 + c_4 \text{, }
\partial c_3 = c_4 + c_2 \text{, }
\partial b_2 = c_2 + a_2 \text{, }
\partial a_1 = a_2 .
\end{equation*}
The arrow from $e_5$ to $e_6$ is horizontal of length $v$, so the generator $f_6$ with boundary $\partial f_6 = e_6 + g_6$ is at position $(x-v+1,y)$. Consider the basis change
\begin{itemize}
\item $a_1' = a_1 + b_2 + c_3 + d_4 + e_5 + f_6$,
\item $\alpha_i' = \alpha_i \text{ for all other basis elements } \alpha_i$.
\end{itemize}

Since each of the elements being added to $a_1$ are contained in $ \left( A \otimes B^* \right) _{x,y}$, this is a filtered change of basis. $\partial a_1' = g_6$, and $g_6$ is at the lattice point $(x-v+1, y-w)$, so since $v \geq 2$, the arrow from $a_1'$ to $g_6$ is diagonal (neither vertical nor horizontal). $a_1'$ is not in the image of any vertical differential because $a_1$ is not. $\partial b_1' = a_1' + c_3' + d_4' + e_5' + f_6'$, so there is an arrow from $b_1'$ to $a_1'$. Since $a_1$ is not in image of any basis element other that $b_1$, the arrow from $b_1'$ to $a_1'$ is the unique horizontal arrow into $a_1'$, so $\varepsilon(A \otimes B^*) = 1$. 

Next, we will complete the proof by showing that
\begin{equation*}
2(1,u,1,v) > (1,u,1,v,1,w, \hdots).
\end{equation*}
Lemma 3.2 of \cite{HHN12} establishes that
\begin{equation*}
2(1, u, 1, v) \sim_{\varepsilon} (1, u, 1, v, 1, u, 1, v).
\end{equation*}
Let $A$ be as above, and $C = (1,u,1,v,1,u,1,v)$. It suffices to show that
\begin{equation*}
\varepsilon(C \otimes A^*) = 1.
\end{equation*}
The proof of this fact will follow similarly to the proof that $\varepsilon(A \otimes B^*) = 1$ above.

Denote the first 8 generators of $C$ by
\begin{equation*}
a, b, c, d, e, f, g, \text{ and } h,
\end{equation*}
and denote the last 8 generators of $B^*$ by 
\begin{equation*}
x_8, x_7, \hdots , x_1 .
\end{equation*}
As above, a letter $\alpha \in \{a, b, \hdots h \}$ with subscript $i \in \{1, \hdots, 8 \}$ will denote the tensor product $\alpha \otimes x_i$. 

If $a_1$ is at lattice position $(x,y)$, then so are $b_2$, $c_3$, $d_4$, $e_5$, and $f_6$. Their images under the differential are given by 
\begin{equation*}
\partial a_1 = a_2\text{, } \partial b_2 = a_2 + c_2 \text{, } \partial c_3 = c_2 + c_4 \text{, } \partial d_4 = c_4 + e_4 \text{, } \partial e_5 = e_4 + e_6 \text{, } \partial f_6 = e_6 + g_6 .
\end{equation*}
We also have the generator $g_7$ at lattice position $(x, y - u+w)$ with image under the differential $\partial g_7 = g_6 + g_8$. Consider the basis change 
\begin{itemize}
\item $a_1' = a_1 + b_2 + c_3 + d_4 + e_5 + f_6 + g_7$,
\item $\alpha_i' = \alpha_i$ for all other generators $\alpha_i$.
\end{itemize}
This is a filtered change of basis, and $\partial a_1' = g_8$, so since $u > w$, the image of $a_1'$ is represented by a single diagonal arrow. $\partial b_1' = a_1' + c_3' + d_4' + e_5' + f_6' + g_7'$, so there is a horizontal arrow from $b_1'$ to $a_1'$, but there are no arrows from any other generator to $a_1'$ since $a_1$ is not the image of any basis element other than $b_1$. Hence $\varepsilon(C \otimes B^*) = 1$, completing the proof.
\end{proof}

\subsection{Propositions \ref{k-staircase} and \ref{np+1stair}}
Next, we will establish Proposition \ref{k-staircase}, which stated that the knot Floer complex of $\K_{n,p}$ is $\varepsilon$-equivalent to the staircase complex

\begin{equation*}
(1, n(p+1) -1 ) + (1, n(p-1)-1) + (1, 2n-1, 1, n(p-2) - 1, 1, n-1, \hdots) .
\end{equation*}

We first must prove several lemmas about the knot Floer complexes and Alexander polynomials of torus knots and their cables. Along the way we will prove Proposition \ref{np+1stair}. 

\begin{lemma}
\label{np+1poly}
Let $n \geq 1$ and $p \geq 2$ be integers. Then the Alexander polynomial of the $(p, n p+1)$ torus knot is
\begin{equation*}
\Delta_{T_{p,np+1}}(t)=\sum_{i=0}^{n(p-1)}{t^{ip}}-\sum_{j=0}^{p-2}{\sum_{k=0}^{n-1}{t^{kp+j(pn+1)+1}}} .
\end{equation*}
\end{lemma}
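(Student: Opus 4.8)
\textbf{Proof proposal for Lemma \ref{np+1poly}.}

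The plan is to compute $\Delta_{T_{p,np+1}}(t)$ directly from the well-known closed form for torus knot Alexander polynomials and then expand it as a difference of two sums in the claimed way. Recall that for coprime $a, b > 0$ one has
\begin{equation*}
\Delta_{T_{a,b}}(t) = \frac{(t^{ab} - 1)(t - 1)}{(t^a - 1)(t^b - 1)}
\end{equation*}
up to units. First I would substitute $a = p$, $b = np + 1$, so that $ab = p(np+1) = np^2 + p$. The key algebraic observation is that $ab = p(np+1)$, so $t^{ab} - 1 = t^{p(np+1)} - 1$ is divisible by $t^{np+1} - 1$, giving
\begin{equation*}
\frac{t^{ab}-1}{t^{b}-1} = \frac{t^{p(np+1)} - 1}{t^{np+1} - 1} = \sum_{i=0}^{p-1} t^{i(np+1)}.
\end{equation*}
Thus $\Delta_{T_{p,np+1}}(t) = \dfrac{(t-1)\sum_{i=0}^{p-1} t^{i(np+1)}}{t^{p} - 1}$, and the problem reduces to showing this equals the stated double-sum expression.

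Next I would clear denominators: it suffices to verify the polynomial identity
\begin{equation*}
(t-1)\sum_{i=0}^{p-1} t^{i(np+1)} = (t^p - 1)\left( \sum_{i=0}^{n(p-1)} t^{ip} - \sum_{j=0}^{p-2}\sum_{k=0}^{n-1} t^{kp + j(np+1) + 1} \right).
\end{equation*}
The right-hand side I would regroup by splitting the index $i$ of $\sum_{i=0}^{n(p-1)} t^{ip}$ according to residue/block structure: writing $i = kp + \ell$ won't be quite right since the exponents are already multiples of $p$; instead I would use that $(t^p - 1)\sum_{i=0}^{n(p-1)} t^{ip} = t^{p(n(p-1)+1)} - 1 = t^{np^2 - np + p} - 1$ telescopes, and similarly $(t^p-1)\sum_{k=0}^{n-1} t^{kp} = t^{np} - 1$, so the inner double sum contributes $(t^{np}-1)\sum_{j=0}^{p-2} t^{j(np+1)+1}$. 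Hence the right-hand side collapses to $t^{np^2-np+p} - 1 - (t^{np}-1)\,t\sum_{j=0}^{p-2} t^{j(np+1)}$, and I would likewise telescope the remaining geometric sum $(t^{np}-1)\sum_{j=0}^{p-2}t^{j(np+1)}$; using $t^{np}\cdot t^{j(np+1)} = t^{(j+1)(np+1)} t^{-1}\cdot t = t^{(j+1)(np+1)}$ (since $np = (np+1)-1$), this last sum telescopes to $t\big(t^{(p-1)(np+1)} - 1\big)/\text{(appropriate factor)}$ — at which point everything is an explicit polynomial in $t$ and both sides can be matched term by term.

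The main obstacle is bookkeeping: the three nested telescopings interact, and the exponent arithmetic $np = (np+1) - 1$ must be tracked carefully so that the "$+1$" shifts in the double sum line up with the factors of $(t-1)$ and $(t^p-1)$. I would therefore organize the computation around the single clean identity $t^{ab} - 1 = (t^p - 1)\sum_{i=0}^{n(p-1)} t^{ip} + \text{(correction terms)}$ rather than expanding blindly; alternatively, one can verify the identity by checking it is equivalent to the standard fact that $T_{p,np+1}$ is an L-space knot (since $np+1 \geq p$ ensures positivity), whose Alexander polynomial has the alternating form recorded in Section \ref{lspacesub}, and matching the gap sequence $n_i$ — but the direct generating-function computation above is self-contained and is the route I would take.
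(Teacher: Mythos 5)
Your proposal is correct and follows essentially the same route as the paper's proof: reduce via the closed form $\Delta_{T_{p,q}}(t)=\frac{(t-1)(t^{pq}-1)}{(t^p-1)(t^q-1)}$ to the polynomial identity $(t^p-1)\bigl(\sum_i t^{ip}-\sum_{j,k}t^{kp+j(np+1)+1}\bigr)=(t-1)\sum_{j=0}^{p-1}t^{j(np+1)}$, telescope both sums against $t^p-1$, use the exponent shift $np+j(np+1)+1=(j+1)(np+1)$, and factor out $t-1$. The remaining bookkeeping you defer is exactly the short regrouping the paper carries out, so no gap.
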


\begin{corollary}[Proposition \ref{np+1stair}]
Up to $\varepsilon$-equivalence, the knot Floer complex of the $(p,n p+1)$ torus knot is 
\begin{equation*}
CFK^{\infty}(T_{p,n p+1}) \sim_{\varepsilon} CFK^{\infty}(T_{p,p+1}) ^{\otimes n} ,
\end{equation*}
where $CFK^{\infty}(T_{p,p+1})$ is given by
\begin{equation*}
CFK^{\infty}(T_{p,p+1}) \sim_{\varepsilon} (1,p-1) + (2, p-2, \hdots) .
\end{equation*}
\end{corollary}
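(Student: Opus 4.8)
The plan is to invoke the structure theorem for L-space knots. Since $p$ and $np+1$ are positive, $T_{p,np+1}$ is an L-space knot, so by \cite{OS05} the complex $CFK^{\infty}(T_{p,np+1})$ is the staircase complex whose gap sequence $0=n_0<n_1<\cdots<n_M=2g$ is read off directly from $\Delta_{T_{p,np+1}}(t)$, which is supplied by Lemma \ref{np+1poly}. So the first step is to sort the monomials of $\Delta_{T_{p,np+1}}(t)$ by degree and verify that the nonzero coefficients alternate $+1,-1,+1,\hdots$, beginning and ending with $+1$. Writing each exponent of the negative sum as $kp+j(pn+1)+1=(k+jn)p+(j+1)$ and noting that $(k,j)\mapsto(k+jn,\,j+1)$ identifies the index set with $\{(m,\lfloor m/n\rfloor+1)\mid 0\le m\le n(p-1)-1\}$, one checks that the block $[mp,(m+1)p)$ contains exactly one exponent from the negative sum, namely $mp+\lfloor m/n\rfloor+1$, and that there are no negative exponents beyond $n(p-1)p$; hence no cancellation occurs and the exponents interleave as claimed. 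Reading off the consecutive gaps then yields $a_{2m}=\lfloor m/n\rfloor+1$ and $a_{2m+1}=(p-1)-\lfloor m/n\rfloor$.

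Consequently $CFK^{\infty}(T_{p,np+1})$ is $\varepsilon$-equivalent to the staircase whose first half (the portion recorded by the staircase notation, up to the point of symmetry) consists of $n$ consecutive copies of the step $(1,p-1)$, followed by $n$ copies of $(2,p-2)$, followed by $n$ copies of $(3,p-3)$, and so on up to the centre of the staircase. Taking $n=1$ gives $CFK^{\infty}(T_{p,p+1})\sim_{\varepsilon}(1,p-1,2,p-2,3,p-3,\hdots)$, and one application of Lemma \ref{staircaselemma} splitting off the leading step (legitimate because every later entry lies in $[1,p-1]$) produces the stated form $CFK^{\infty}(T_{p,p+1})\sim_{\varepsilon}(1,p-1)+(2,p-2,\hdots)$.

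For the tensor-product identity, the plan is to peel both staircases apart into their constituent steps by repeated applications of Lemma \ref{staircaselemma}: after removing a leading step $(k,p-k)$ from either $CFK^{\infty}(T_{p,p+1})$ or $CFK^{\infty}(T_{p,np+1})$, the remaining entries all lie in $[k,p-k]$, so the hypothesis of Lemma \ref{staircaselemma} holds at every stage. This exhibits $CFK^{\infty}(T_{p,p+1})\sim_{\varepsilon}\bigotimes_{k}(k,p-k)$ and $CFK^{\infty}(T_{p,np+1})\sim_{\varepsilon}\bigotimes_{k}(k,p-k)^{\otimes n}$ with the same multiset of steps (with the central factor a single-entry staircase $(p/2)$ when $p$ is even), so commutativity of $\otimes$ gives $CFK^{\infty}(T_{p,np+1})\sim_{\varepsilon}CFK^{\infty}(T_{p,p+1})^{\otimes n}$. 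The main obstacle is the combinatorial bookkeeping: establishing the interleaving and absence of cancellation in the Alexander polynomial from Lemma \ref{np+1poly} (the one genuinely computational point), and then checking that the inequalities in Lemma \ref{staircaselemma} persist through the iterated peeling — with a little extra care at the centre of the staircase when $p$ is even, where the central step is the single entry $(p/2)$ and one needs the elementary observation that $(p/2)^{\otimes n}$ is $\varepsilon$-equivalent to the staircase consisting of $n$ copies of $p/2$.
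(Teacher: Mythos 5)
Your proposal is correct and follows essentially the same route as the paper: both read the staircase of the L-space knot $T_{p,np+1}$ off the Alexander polynomial of Lemma \ref{np+1poly} and then split and regroup the constituent steps $(k,p-k)$ by repeated application of Lemma \ref{staircaselemma}, using commutativity of $\otimes$ to identify the result with $CFK^{\infty}(T_{p,p+1})^{\otimes n}$. Your explicit verification that no cancellation occurs and your flagged handling of the central single step $(p/2)$ when $p$ is even (a step not literally covered by Lemma \ref{staircaselemma}, whose first block must have even length) address points the paper passes over with ``examining the degrees'' and ``rearranging and repeated application'' of that lemma, so if anything your write-up is the more careful one.
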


\begin{proof}[Proof of Lemma \ref{np+1poly}]
    
   Since
   
\begin{equation}
\label{torus_poly}
\Delta_{T_{p,q}}(t) = \frac{(t-1)(t^{p q} -1)}{(t^p-1)(t^q-1)},
\end{equation}
it suffices to check
    
   \begin{equation}
   \label{np_to_check}
   (t^p-1)\left(\sum_{i=0}^{n(p-1)}{t^{ip}}-\sum_{j=0}^{p-2}{\sum_{k=0}^{n-1}{t^{kp+j(pn+1)+1}}}\right)=(t-1)\sum_{j=0}^{(p-1)} t^{j(np+1)} .
   \end{equation}

We note that the terms of the LHS of (\ref{np_to_check}) telescope, leaving
     \begin{equation}
     \label{np_lhs_1}
    (t^p-1)\sum_{i=0}^{n(p-1)}{t^{ip}}=t^{(n(p-1)+1)p}-1
    \end{equation}
and
    \begin{equation}
    \label{np_lhs_2}
    -(t^p-1)\Big(\sum_{j=0}^{p-2}{\sum_{k=0}^{n-1}{t^{kp+j(pn+1)+1}}}\Big)=-(t^{np}-1)\sum_{j=0}^{p-2}{t^{j(pn+1)+1}} .
    \end{equation}

   In the RHS of (\ref{np_lhs_2}), $t^{n p} \sum_{j=0}^{p-2}{t^{j(pn+1)+1}} = \sum_{j=1}^{p-1} t^{j (np+1)}$, so the equations in (\ref{np_lhs_2}) also equal
    \begin{equation}
    \label{np_lhs_3}
    -\left(t^{(p-1)(np+1)}-t\right)+(t-1)\sum_{j=1}^{p-2}{t^{j(np+1)}} .
    \end{equation}

     Thus by (\ref{np_lhs_1}) and (\ref{np_lhs_3}), the LHS of (\ref{np_to_check}) equals
    \begin{equation*}
t^{(n(p-1)+1)p}-1-\left(t^{(p-1)(np+1)}-t\right)+(t-1)\sum_{j=1}^{p-2}{t^{j(np+1)}} .
    \end{equation*}
    So by factoring out $t-1$, we get:
    \begin{equation}
    (t^p-1)\left(\sum_{i=0}^{n(p-1)}{t^{ip}}-\sum_{j=0}^{p-2}{\sum_{k=0}^{n-1}{t^{kp+j(pn+1)+1}}}\right)=(t-1)\left(t^{(np+1)(p-1)}+1+\sum_{j=1}^{p-2}{t^{j(np+1)}}\right) .
    \end{equation}

    Note that the $(p-1)^{th}$ term of the RHS of (\ref{np_to_check}) equals $t^{(np+1)(p-1)}$ and the $0^{th}$ term equals 1, establishing equation (\ref{np_to_check}).

\end{proof}

\begin{proof}[Proof of Proposition \ref{np+1stair}]
As a positive torus knot, the knot Floer complex $T_{p,np+1}$ is a staircase complex whose stair lengths are determined by the difference between the degrees of successive terms of its Alexander polynomial. We will start with the case $n=1$. 

Examining the degrees of the monomials in the Alexander polynomial of $T_{p,p+1}$, we see that $CFK^{\infty}(T_{p,p+1})$ is given by the staircase complex
\begin{equation*}
(1,p-1,2,p-2,3,p-3,\hdots ,\lfloor p/2 \rfloor, \lceil p/2 \rceil) ,
\end{equation*}
which, by Lemma \ref{staircaselemma}, is $\varepsilon$-equivalent to 
\begin{equation*}
(1,p-1) + (2, p-2, \hdots) .
\end{equation*}

Moving on to $T_{p,n p + 1}$ for $n > 1$, its knot Floer complex is
\begin{equation*}
(1, p-1, \hdots , 1, p-1, 2, p-2, \hdots , 2, p-2, 3, p-3, \hdots) ,
\end{equation*}
where for each $1 \leq r \leq \lfloor p/2 \rfloor$ the ellipsis stands in for $n$ successive pairs of stairs of length $r, p-r$. Then by rearranging and repeated application of Lemma \ref{staircaselemma}, we have
\begin{align*}
CFK^{\infty}(T_{p, np +1}) &\sim_{\varepsilon} (1, p-1, \hdots , 1, p-1) + (2, p-2, \hdots , 2, p-2) + \hdots + (\lfloor p/2 \rfloor, \lceil p/2 \rceil , \hdots , \lfloor p/2 \rfloor, \lceil p/2 \rceil) \\
& \sim_{\varepsilon} n(1,p-1) + n(2,p-2), \hdots , n(\lfloor p/2 \rfloor , \lceil p/2 \rceil) \\
& \sim_{\varepsilon} n[(1,p-1) + (2, p-2), \hdots (\lfloor p/2 \rfloor , \lceil p/2 \rceil)] \\
& \sim_{\varepsilon} n(1,p-1,2,p-2, \hdots) ,
\end{align*}
establishing the proposition.
\end{proof}

\begin{lemma}
\label{pcable}
Let $p,n$ be positive integers with $p \geq 2$. Then the Alexander polynomial of the $(p,p+1)$ cable of the $(2,3)$ torus knot is

\begin{equation*}
\Delta_{T_{2,3;p,p+1}}(t) = \sum_{i=0}^{p} t^{i (p+1)} + \sum_{i=2}^{p-1} t^{i p} - \sum_{i=0}^{p-2} t^{i(p+1) + 1} - \sum_{i=2}^p t^{i(p+1)-1} .
\end{equation*}

\end{lemma}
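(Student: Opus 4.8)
The plan is to compute $\Delta_{T_{2,3;p,p+1}}(t)$ from the general cabling formula for the Alexander polynomial. For a satellite with companion $K$, pattern of winding number $w$, the cable $K_{w,m}$ satisfies
\begin{equation*}
\Delta_{K_{w,m}}(t) = \Delta_{K}(t^{w}) \cdot \Delta_{P}(t),
\end{equation*}
where $\Delta_P(t)$ is the Alexander polynomial of the $(w,m)$ torus knot pattern, i.e. $\Delta_P(t) = \Delta_{T_{w,m}}(t)$. Here $w = p$, $m = p+1$, and $K = T_{2,3}$, so $\Delta_K(t) = t^2 - t + 1$ and hence $\Delta_K(t^p) = t^{2p} - t^{p} + 1$. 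Using the torus knot formula (\ref{torus_poly}) with $(w,m) = (p,p+1)$, we have $\Delta_{T_{p,p+1}}(t) = \frac{(t-1)(t^{p(p+1)}-1)}{(t^p-1)(t^{p+1}-1)}$, which by the same telescoping argument as in Proposition \ref{np+1stair} (the $n=1$ case of Lemma \ref{np+1poly}) equals $\sum_{i=0}^{p} t^{i(p+1)} \cdot (\text{something})$; more precisely it is the alternating "staircase" polynomial $1 + \sum_{j \geq 1}(\pm)t^{\dots}$ whose exponents are $0, 1, p+1, p+2, 2(p+1), \dots$. So the task reduces to multiplying $(t^{2p} - t^p + 1)$ by this explicit polynomial and collecting terms.

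The key steps, in order: (1) write down $\Delta_{T_{p,p+1}}(t)$ explicitly, either by invoking the $n=1$ specialization of Lemma \ref{np+1poly} or by direct telescoping of (\ref{torus_poly}); record it as a signed sum of monomials with the pattern of exponents coming in consecutive pairs differing by $1$ and then jumping by $p-1$. (2) Multiply by $1$, by $-t^p$, and by $t^{2p}$ separately, obtaining three shifted copies of that signed monomial sum. (3) Collect: the claim is that after cancellation the result is exactly $\sum_{i=0}^{p} t^{i(p+1)} + \sum_{i=2}^{p-1} t^{ip} - \sum_{i=0}^{p-2} t^{i(p+1)+1} - \sum_{i=2}^p t^{i(p+1)-1}$. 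The cleanest way to verify (3) is to clear denominators exactly as in the proof of Lemma \ref{np+1poly}: multiply the claimed RHS by $(t^p-1)(t^{p+1}-1)$ and check it equals $(t-1)(t^{p(p+1)}-1)(t^{2p}-t^p+1)$ — both sides are then concrete polynomials whose telescoping sums can be matched term by term.

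I expect the main obstacle to be step (3), the bookkeeping of which monomials survive cancellation after the three-fold shift-and-add. There are genuine coincidences of exponents to track — for instance terms of the form $t^{i(p+1)+1}$ from one copy can collide with $t^{jp}$-type terms from another when $i(p+1)+1 = jp$, and near the "turnaround" of the staircase (exponents around $\lfloor p(p+1)/2\rfloor$) the multiplication by $t^{2p} - t^p + 1$ produces overlaps that must be handled carefully, with the parity of $p$ possibly mattering. To keep this under control I would reformulate everything via the clear-denominators identity so that the check becomes a telescoping computation in the style of Lemma \ref{np+1poly} rather than a direct convolution of sparse polynomials; that converts the problem into verifying a polynomial identity where both sides collapse to short expressions, and the coincidences become automatic rather than something to enumerate by hand.
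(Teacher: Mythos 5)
Your approach is essentially the paper's: you invoke the cabling formula $\Delta_{T_{2,3;p,p+1}}(t)=\Delta_{T_{2,3}}(t^p)\,\Delta_{T_{p,p+1}}(t)$, take the $n=1$ case of Lemma \ref{np+1poly} for $\Delta_{T_{p,p+1}}$, and expand against $1-t^p+t^{2p}$, which is exactly how the paper argues (by direct telescoping of the two shifted sums rather than by clearing denominators). Your parenthetical listing of the exponents of $\Delta_{T_{p,p+1}}$ as $0,1,p+1,p+2,\dots$ is a slip (they are $0,1,p,p+2,2p,\dots$), but it is harmless since you defer to Lemma \ref{np+1poly} for the exact formula, and the feared exponent collisions and parity issues do not materialize: the only cancellation in the expansion is the single term $t^{p^2}$.
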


\begin{proof}
Recall that 
\begin{equation*}
\Delta_{T_{r,s;p,q}} (t) = \Delta_{T_{r,s}}(t^p) \cdot \Delta_{T_{p,q}} (t).
\end{equation*}
Since $\Delta_{T_{2,3}}(t)= 1-t+t^2$, by Lemma \ref{np+1poly}, 
\begin{equation}
\label{pstart}
\Delta_{T_{2,3;p,p+1}}(t) = (1-t^p+t^{2p}) \cdot \left( \sum_{i=0}^{p-1} t^{i p} - \sum_{i=0}^{p-2} t^{i (p+1)+1}\right) .
\end{equation} 
Considering the RHS of (\ref{pstart}), we will first expand $(1-t^p+t^{2p}) \cdot \sum_{i=0}^{p-1} t^{i p}$. The equation
\begin{equation*}
(1-t^p) \cdot \sum_{i=0}^{p-1} t^{i p}
\end{equation*}
telescopes, leaving only $1-t^{p^2}$, and
\begin{equation*}
t^{2p} \cdot \sum_{i=0}^{p-1} t^{i p} = \sum_{i=2}^{p+1} t^{i p},
\end{equation*}
so 
\begin{equation}
\label{phalf1}
(1-t^p+t^{2 p}) \cdot \sum_{i=0}^{p-1} t^{i p}=1-t^{p^2}+\sum_{i=2}^{p+1} t^{i p} = 1 +t^{p^2+p}+ \sum_{i=2}^{p-1} t^{i p} .
\end{equation}
Returning to the remaining part of the RHS of (\ref{pstart}),  
\begin{equation*}
-t^p \left( -\sum_{i=0}^{p-2} t^{i (p+1) + 1} \right) = \sum_{i=1}^{p-1} t^{i (p+1)}
\end{equation*}
and 
\begin{equation*}
t^{2 p } \left( -\sum_{i=0}^{p-2} t^{i (p+1) + 1} \right) = -\sum_{i=2}^{p} t^{i (p+1) -1},
\end{equation*}
so 
\begin{equation}
\label{phalf2}
-(1-t^p + t^{2p}) \cdot \sum_{i=0}^{p-2} t^{i (p+1) + 1} = -\sum_{i=0}^{p-2} t^{i (p+1) + 1} -\sum_{i=2}^{p} t^{i (p+1) -1}+\sum_{i=1}^{p-1} t^{i (p+1)} .
\end{equation}
Combining (\ref{phalf1}) and (\ref{phalf2}) gives (\ref{pstart}) as desired.
\end{proof}

With the preceding lemmas, we are ready to provide proof of Proposition \ref{k-staircase}. 

\begin{proof}[Proof of Proposition \ref{k-staircase}]

$D$ is $\varepsilon$-equivalent to $T_{2,3}$ (due to Hedden in \cite{Hed07}, see also \cite{Hom11}), so $D_{p,p+1} \sim_{\varepsilon} T_{2,3;p,p+1}$ by Proposition 5.1 of \cite{Hom11}. Hence $\K_{n,p} \sim_{\varepsilon} T_{2,3;p,p+1;n,n(p^2+p)+1}$. Thus it suffices to show that the chain complex of $T_{2,3;p,p+1;n,n(p^2+p)+1}$ takes the desired form. 

$T_{2,3}$ is an L-space knot, so since $g(T_{2,3})=1$, $(p+1)/p \geq 2g(T_{2,3}) -1$, so $T_{2,3;p,p+1}$ is an L-space knot. Its genus is $(p^2+p)/2$, so $(n(p^2+p)+1)/n \geq g(T_{2,3;p,p+1}) -1$, hence $T_{2,3;p,p+1;n,n(p^2+p)+1}$ is also an L-space knot. Thus $CFK^{\infty}(K)$ is determined by gaps between the degree of successive terms of the Alexander polynomial of $T_{2,3;p,p+1;n,n(p^p+p)+1}$. The Alexander polynomial is given by
\begin{equation}
\Delta_{T_{2,3;p,p+1;n,n(p^2+p)+1}} (t) = \Delta_{T_{2,3;p,p+1}}(t^n) \Delta_{T_{n,n(p^2+p)+1}}(t) .
\end{equation}

By Lemmas \ref{np+1poly} and \ref{pcable} we have:
\begin{equation}
\label{p1}
\Delta_{T_{2,3;p,p+1}}(t^n) = 
\displaystyle\sum\limits_{i=0}^{p} t^{n i (p+1)}
+ \displaystyle\sum\limits_{i=2}^{p-1} t^{n i p}
- \displaystyle\sum\limits_{i=0}^{p-2} t^{n i (p+1) + n}
- \displaystyle\sum\limits_{i=2}^{p} t^{n i (p+1) - n}
\end{equation}
and
\begin{equation}
\label{p2}
\Delta_{T_{n,n(p^2+p)+1}}(t) = \displaystyle\sum\limits_{i=0}^{(n-1)(p^2+p)}t^{i n}
-\displaystyle\sum\limits_{j=0}^{n-2} \displaystyle\sum\limits_{k=0}^{p^2+p-1} t^{n k + j((p^2+p)n + 1) + 1}  .
\end{equation}

The proof will proceed in three parts. First, we will expand the terms of a part of the product above (the part that corresponds to all of the positive terms in the Alexander polynomial). Using this expansion, we will examine the smallest terms of the Alexander polynomial of $T_{2,3;p,p+1;n,n(p^2+p)+1}$ to conclude that, for $C = CFK^{\infty}(T_{2,3;p,p+1;n,n(p^2+p)+1})$, 
\begin{equation}
\label{ksecondpart}
C=(1,n(p+1)-1,1,n(p-1)-1,1,2n-1,1,n(p-2)-1,1,n-1,\hdots)
 \end{equation}
Then we will use Lemma \ref{staircaselemma} to show that this staircase is $\varepsilon$-equivalent to the one in the statement of Proposition \ref{k-staircase}. 

First, we will expand the part of the Alexander polynomial given by 
\begin{equation}
\label{Delta+}
\displaystyle\sum\limits_{i=0}^{(n-1)(p^2+p)} t^{i n} \left(
\displaystyle\sum\limits_{i=0}^{p} t^{n i (p+1)}
+ \displaystyle\sum\limits_{i=2}^{p-1} t^{n i p}
- \displaystyle\sum\limits_{i=0}^{p-2} t^{n i (p+1) + n}
- \displaystyle\sum\limits_{i=2}^{p} t^{n i (p+1) - n} \right).
\end{equation}
We will start by looking at 
\begin{equation}
\label{a13}
\displaystyle\sum\limits_{i=0}^{(n-1)(p^2+p)} t^{i n}
\left(\displaystyle\sum\limits_{j=0}^{p} t^{n j (p+1)}- \displaystyle\sum\limits_{j=2}^{p} t^{n j (p+1) - n} \right)  ,
\end{equation}
which is equal to
\begin{align*}
&\displaystyle\sum\limits_{i=0}^{(n-1)(p^2+p)} \displaystyle\sum\limits_{j=0}^{p} t^{i n + n j (p+1)}
-\displaystyle\sum\limits_{i=0}^{(n-1)(p^2+p)} \displaystyle\sum\limits_{j=2}^{p} t^{(i-1) n + n j (p+1)}
\\ &=\displaystyle\sum\limits_{i=1}^{(n-1)(p^2+p)+1} \displaystyle\sum\limits_{j=2}^{p} t^{(i-1) n + n i (p+1)}-\displaystyle\sum\limits_{i=0}^{(n-1)(p^2+p)} \displaystyle\sum\limits_{j=2}^{p} t^{(i-1) n + n i (p+1)} + \displaystyle\sum\limits_{i=0}^{(n-1)(p^2+p)} t^{i n} + \displaystyle\sum\limits_{i=0}^{(n-1)(p^2+p)} t^{i n + (p+1)n}.
\end{align*}
The terms in the two double sums above telescope, leaving only the cases $i=0$ and $i=(n-1)(p^2+p)+1$, so (\ref{a13}) is equal to

\begin{equation}
\displaystyle\sum\limits_{j=2}^p t^{(n-1)(p^2+p)n + nj(p+1)} - \displaystyle\sum\limits_{j=2}^{p} t^{j n (p+1) - n}  + \displaystyle\sum\limits_{i=0}^{(n-1)(p^2+p)} t^{i n} + \displaystyle\sum\limits_{i=0}^{(n-1)(p^2+p)} t^{i n + (p+1)n}.
\end{equation}
Now we will add in the remaining terms to get the following expression for the product in (\ref{Delta+}):
\begin{align*}
& \displaystyle\sum\limits_{j=2}^p t^{(n-1)(p^2+p)n + nj(p+1)} - \displaystyle\sum\limits_{j=2}^{p} t^{j n (p+1) - n}  + \displaystyle\sum\limits_{i=0}^{(n-1)(p^2+p)} t^{i n} + \displaystyle\sum\limits_{i=0}^{(n-1)(p^2+p)} t^{i n + (p+1)n} 
\\ &+ \displaystyle\sum\limits_{i=0}^{(n-1)(p^2+p)} \displaystyle\sum\limits_{j=2}^{p-1} t^{i n + j n p} 
-\displaystyle\sum\limits_{i=0}^{(n-1)(p^2+p)} \displaystyle\sum\limits_{j=0}^{p-2} t^{(i+1) n + j n (p+1)}. 
\end{align*}
Pulling out the $j=p-1$ term from the positive double sum and the $j=0$ and $j=1$ terms from the negative double sum and regrouping gives
\begin{align*}
&\displaystyle\sum\limits_{j=2}^p t^{(n-1)(p^2+p)n + nj(p+1)} - \displaystyle\sum\limits_{j=2}^{p} t^{j n (p+1) - n} + \left(\displaystyle\sum\limits_{i=0}^{(n-1)(p^2+p)} t^{i n} - \displaystyle\sum\limits_{i=0}^{(n-1)(p^2+p)} t^{(i+1) n}\right) 
\\ &+ \left(\displaystyle\sum\limits_{i=0}^{(n-1)(p^2+p)} t^{i n + (p+1)n} - \displaystyle\sum\limits_{i=0}^{(n-1)(p^2+p)} t^{(i+1) n + (p+1)n} \right) + \displaystyle\sum\limits_{i=0}^{(n-1)(p^2+p)} t^{i n + n p (p-1)}
\\ & + \left( \displaystyle\sum\limits_{i=0}^{(n-1)(p^2+p)} \displaystyle\sum\limits_{j=2}^{p-2} t^{i n + j n p} 
-\displaystyle\sum\limits_{i=0}^{(n-1)(p^2+p)} \displaystyle\sum\limits_{j=2}^{p-2} t^{(i+1) n + j n (p+1)}  \right).
\end{align*}
The polynomials in the first and second sets of parentheses telescope in an obvious manner, leaving only the highest and lowest terms (with negative and positive coefficients, respectively). 

Considering the polynomials in the last set of parentheses, note that for each $j \in \{2, \hdots ,p-2\}$, we have 
\begin{align*}
\displaystyle\sum\limits_{i=0}^{(n-1)(p^2+p)} t^{n(i + j p)} - \displaystyle\sum\limits_{i=0}^{(n-1)(p^2+p)} t^{n(i + j p + (j + 1))} &= \displaystyle\sum\limits_{i=0}^{(n-1)(p^2+p)} t^{n(i + j p)} -  \displaystyle\sum\limits_{i=j+1}^{(n-1)(p^2+p)+j+1} t^{n(i + j p )}
\\ &= \displaystyle\sum\limits_{i=0}^{j} t^{n(i + j p)} -
\displaystyle\sum\limits_{i=0}^{j} t^{n(i + j p + (n-1)(p^2+p) + 1)}.
\end{align*}
Canceling and combining yields the following expression for 
the product in (\ref{Delta+}):
\begin{align*}
&\displaystyle\sum\limits_{j=2}^p t^{(n-1)(p^2+p)n + nj(p+1)} - \displaystyle\sum\limits_{j=2}^{p} t^{j n (p+1) - n} + 1 - t^{n(n-1)(p^2+p)+1} + t^{(p+1)n} - t^{n(p+1) + n((n-1)(p^2+p)+1)} 
\\ & + \displaystyle\sum\limits_{i=0}^{(n-1)(p^2+p)} t^{i n + n p (p-1)} + \displaystyle\sum\limits_{j=2}^{p-2}\displaystyle\sum\limits_{i=0}^{j} t^{n(i + j p)} - \displaystyle\sum\limits_{j=2}^{p-2}\displaystyle\sum\limits_{i=0}^{j} t^{n(i + j p + (n-1)(p^2+p) + 1)}.
\end{align*}
From here we note that the degree of each power of $t$ in the terms 
\begin{align*}
& \displaystyle\sum\limits_{j=2}^p t^{(n-1)(p^2+p)n + nj(p+1)}\text{, }
- t^{n(n-1)(p^2+p)+1}\text{, }
- t^{n(p+1) + n((n-1)(p^2+p)+1)} \text{, } \\
& \text{and }
-\displaystyle\sum\limits_{j=2}^{p-2}\displaystyle\sum\limits_{i=0}^{j} t^{n(i + j p + (n-1)(p^2+p) + 1)}
\end{align*}
is greater than $n^2 p(p+1) /2 =g(T_{2,3;p,p+1;n,n(p^2+p)+1})$. Hence these terms occur past the symmetry point of the Alexander polynomial and we can safely ignore them.

Next, pull out the terms $i = j-1$ and $i = j$ from the double sum 
\begin{equation*}
\displaystyle\sum\limits_{j=2}^{p-2}\displaystyle\sum\limits_{i=0}^{j} t^{n(i + j p)}
\end{equation*}
and reorder, yielding

\begin{align*}
&\left(1+t^{(p+1)n} + \displaystyle\sum\limits_{j=2}^{p-2} t^{n j (p+1)}\right) +  \left(\displaystyle\sum\limits_{j=2}^{p-2} t^{ n ((p+1)j - 1)}- \displaystyle\sum\limits_{j=2}^{p} t^{ n((p+1)j - 1)} \right) 
+ \displaystyle\sum\limits_{i=0}^{(n-1)(p^2+p)} t^{i n + n p (p-1)} \\& + \displaystyle\sum\limits_{j=2}^{p-2}\displaystyle\sum\limits_{i=0}^{j-2} t^{n(i + j p)}
\\& = \displaystyle\sum\limits_{j=0}^{p-2} t^{n j (p+1)} - t^{n (p-1) (p+1) - n} - t^{n p (p+1) - n} + \displaystyle\sum\limits_{i=0}^{(n-1)(p^2+p)} t^{i n + n p (p-1)} + \displaystyle\sum\limits_{j=2}^{p-2}\displaystyle\sum\limits_{i=0}^{j-2} t^{n(i + j p)},
\\
\end{align*}
which equals
\begin{equation}
\label{plus_equation}
\displaystyle\sum\limits_{i=0}^{p-3} t^{i n + n p (p-1)} + \displaystyle\sum\limits_{i=p-1}^{2p-2} t^{i n + n p (p-1)} + \displaystyle\sum\limits_{i= 2p-2}^{(n-1)(p^2+p)} t^{i n + n p (p-1)} + \displaystyle\sum\limits_{j=2}^{p-2} t^{n j (p+1)} + \displaystyle\sum\limits_{j=2}^{p-2}\displaystyle\sum\limits_{i=0}^{j-2} t^{n(i + j p)} .
\end{equation}
Pulling out the six terms of lowest degree from the above, we have: 

\begin{equation*}
\displaystyle\sum\limits_{i=0}^{p-3} t^{i n + n p (p-1)} + \displaystyle\sum\limits_{i=p-1}^{2p-2} t^{i n + n p (p-1)} + \displaystyle\sum\limits_{i= 2p-2}^{(n-1)(p^2+p)} t^{i n + n p (p-1)} + \displaystyle\sum\limits_{j=3}^{p-2} t^{n j (p+1)} + \displaystyle\sum\limits_{j=4}^{p-2}\displaystyle\sum\limits_{i=0}^{j-2} t^{n(i + j p)} + p_0(t),
\end{equation*}
where we take the convention that if the index in a sum begins above its maximum value, then the sum evaluates to zero, and where $p_0(t)$ corresponds to the five lowest terms, i.e.
\begin{equation}
\label{p_0}
p_0(t) = 1 + t^{n(p+1)} + t^{2 n p}+t^{2n(p+1)} + t^{3 n p}+ t^{n(3 p + 1)} .
\end{equation}

To establish (\ref{ksecondpart}), we will look at the terms of the product
\begin{equation}
\label{negequation}
-\displaystyle\sum\limits_{j=0}^{n-2} \displaystyle\sum\limits_{k=0}^{p^2+p-1} t^{n k + j((p^2+p)n + 1) + 1} \left(
\displaystyle\sum\limits_{i=0}^{p} t^{n i (p+1)}
+ \displaystyle\sum\limits_{i=2}^{p-1} t^{n i p}
- \displaystyle\sum\limits_{i=0}^{p-2} t^{n i (p+1) + n}
- \displaystyle\sum\limits_{i=2}^{p} t^{n i (p+1) - n}
\right)
\end{equation}
that have degree less than or equal to $3 n p+n$. The terms arising from 
\begin{equation*}
-\displaystyle\sum\limits_{j=0}^{n-2} \displaystyle\sum\limits_{k=0}^{p^2+p-1} t^{n k + j((p^2+p)n + 1) + 1} \left(
\displaystyle\sum\limits_{i=0}^{p} t^{n i (p+1)} \right)
\end{equation*}
with sufficiently small degree are
\begin{equation*}
-\sum_{k=0}^{3p} t^{n k + 1} -\sum_{k=0}^{2 p -1} t^{n k + n(p+1)+1} -\sum_{k=0}^{p-2} t^{n k + 2 n (p+1) + 1},
\end{equation*}
those arising from
\begin{equation*}
-\displaystyle\sum\limits_{j=0}^{n-2} \displaystyle\sum\limits_{k=0}^{p^2+p-1} t^{n k + j((p^2+p)n + 1) + 1} \left(
\sum_{i=2}^{p-1} t^{i n p} \right)
\end{equation*}
are
\begin{equation*}
-\sum_{k=0}^{p} t^{2 n p + n k + 1} - t^{3 n p + 1},
\end{equation*}
those from
\begin{equation*}
-\displaystyle\sum\limits_{j=0}^{n-2} \displaystyle\sum\limits_{k=0}^{p^2+p-1} t^{n k + j((p^2+p)n + 1) + 1} \left(
-\sum_{i=0}^{p-2} t^{n i (p+1) + n} \right)
\end{equation*}
are
\begin{equation*}
\sum_{k=1}^{3p} t^{n k + 1} + \sum_{k = 1}^{2p -1} t^{n k + n(p+1) + 1} + \sum_{k = 1}^{p-2} t^{n k + 2n (p+1) + 1} ,
\end{equation*}
and finally, the terms from the remaining product are
\begin{equation*}
\sum_{i=1}^{p} t^{2 n p + i n + 1}.
\end{equation*}
Combining the above, we are left with
\begin{equation}
\label{p0-}
-t - t^{n (p+1) + 1} - t^{2 n (p+1) + 1} - t^{ 2 n p + 1} - t^{3 n p + 1}.
\end{equation}
The sum of (\ref{p0-}) and (\ref{p_0}) consists of the eleven terms of the polynomial of lowest degree, and by examining the difference in the degrees of successive terms, we arrive at (\ref{ksecondpart}).

By Lemma \ref{staircaselemma}, in order to show that
\begin{equation*} (1,n(p+1)-1,1,n(p-1)-1,1,2n-1,1,n(p-2)-1, \hdots ) 
\end{equation*}
is $\varepsilon$-equivalent to
\begin{equation*}
 (1, n(p+1)-1) + (1, n(p-1) -1) + (1, 2n-1, 1, n(p-2) - 1, \hdots ) ,
\end{equation*}
it suffices to show that the difference between the degree of any two consecutive terms of the Alexander polynomial after the difference between the fourth and the fifth terms and up to the point of symmetry is less than or equal to $n(p-1)-1$. 

We will do so in two steps. First, we will show that none of the negative terms in (\ref{negequation}) have the same degree as any of the terms in (\ref{plus_equation}). This tells us that every power of $t$ in (\ref{plus_equation}) must appear in the desired Alexander polynomial with nonzero coefficient (hence coefficient equal to 1 since the knot in question is an L-space knot). Then we will show that for any monomial $t^q$ in (\ref{plus_equation}) (not including $1$, $t^{n(p+1)}$, or $t^{2 n p}$), there must be another monomial $t^{q'}$ in the sum with $q'-q \leq n(p-1)$. Since the coefficients of successive terms in the Alexander polynomial of an L-space knot alternate between positive and negative, this shows that the difference between any two successive terms must be non-strictly less than $n(p-1)-1$ as desired. 

Observe that the degree of every term in (\ref{plus_equation}) is divisible by $n$. The degree of every monomial with negative coefficient in (\ref{negequation}) differs from a multiple of $n$ by $j+1$, so since $1 \leq j+1 \leq n-1$, none of their degrees is divisible by $n$, so none of the terms in (\ref{negequation}) have the same degree as any of the terms in (\ref{plus_equation}) .

Now note that the difference between degrees of successive terms in each of the first three sums in (\ref{plus_equation}) is $n$, and the difference between the highest degree of the first sum and the lowest of the second sum (and similarly between the second and third sums) is $2 n$. Furthermore, the degree of the highest term in the third sum is $(n-1)(p^2+p)n+n p (p-1)$, which is past the point of symmetry of $\Delta_{K_n(p)} (t)$, so we only need to consider terms with degree less than $n p (p-1)$, the degree of the smallest term in these three sums. 

Next, consider the double sum \begin{equation*}
\displaystyle\sum\limits_{j=3}^{p-2}\displaystyle\sum\limits_{i=0}^{j-2} t^{n(i + j p)} .
\end{equation*}
For any given $j \in \{3, \hdots ,p-2\}$, the terms 
\begin{equation*}
t^{n j p}, t^{n j p + n}, t^{n j p + 2 n}, \hdots , t^{n j p + (j-2)n}
\end{equation*} 
differ successively in degree by $n$. Furthermore, $t^{j n p + (j-2)n}$, the largest of the terms arising from $j$, differs in degree from $t^{n p j + n p}$, the smallest of the terms arising from $j+1$, by $n p - n(j-2) = n(p-j+2)$, so since $j \geq 3$ this difference is smaller than $n(p-1)$. 

Finally,  consider the terms from the sum 
\begin{equation*}
\displaystyle\sum\limits_{j=2}^{p-2} t^{n j (p+1)} .
\end{equation*} The smallest of these is $t^{2n(p+1)}$, which differs in degree from the smallest term $t^{3 n p}$ of the double sum by only $3 n p - 2 n p - 2 n = n(p-2)$. All following terms have degree greater than $3 n p$, so their degree must be within $n(p-1)$ of one of the terms listed above. 

Therefore the difference in degree of any two successive terms after the stairs $(1,n(p+1),1,n(p-1)-1,\hdots)$ must have length less than or equal to $n(p-1)-1$. The result follows from Lemma \ref{staircaselemma}.
\end{proof}

\subsection{Propositions \ref{sstair} and \ref{2p-1stair}}
Next, we move on to the proof of Proposition \ref{sstair}, which stated that the knot Floer complex of $\S_q$ is $\varepsilon$-equivalent to the staircase complex
\begin{equation*}
(1, q) + (2, \hdots ).
\end{equation*}

We will again need a lemma about the Alexander polynomial of a family of torus knots, which will lead to the proof of Proposition \ref{2p-1stair}. 
\begin{lemma}
\label{2p-1poly}
For any integer $p \geq 2$, the Alexander polynomial of the $(p, 2p-1)$ torus knot is 
\begin{equation*}
\Delta_{T_{p,2p-1}}(t) = \sum_{i=0}^{p-2} (t^{(2 p -1)i} + t^{(2p-1)i + p}) - \sum_{i=0}^{2(p-2)} t^{i p + 1} .
\end{equation*}
\end{lemma}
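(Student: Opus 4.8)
The plan is to follow the same route as the proof of Lemma~\ref{np+1poly}: turn the stated formula into a polynomial identity and verify it by telescoping geometric series. Set $q = 2p-1$, so $pq = 2p^2-p$, and recall
\begin{equation*}
\Delta_{T_{p,q}}(t) = \frac{(t-1)(t^{pq}-1)}{(t^p-1)(t^q-1)}.
\end{equation*}
Since the right-hand side above is a polynomial with constant term $1$, and the claimed expression also has constant term $1$, it suffices to prove the identity
\begin{equation*}
\Bigl(\sum_{i=0}^{p-2}\bigl(t^{(2p-1)i} + t^{(2p-1)i+p}\bigr) - \sum_{i=0}^{2(p-2)} t^{ip+1}\Bigr)(t^p-1)(t^{2p-1}-1) = (t-1)(t^{2p^2-p}-1).
\end{equation*}

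First I would regroup the left factor as
\begin{equation*}
P(t) = (1+t^p)\sum_{i=0}^{p-2} t^{(2p-1)i} \;-\; t\sum_{i=0}^{2p-4} t^{ip},
\end{equation*}
using $2(p-2) = 2p-4$. Multiplying through by $(t^p-1)(t^{2p-1}-1)$ and applying the telescoping identities
\begin{equation*}
(t^{2p-1}-1)\sum_{i=0}^{p-2} t^{(2p-1)i} = t^{(2p-1)(p-1)} - 1, \qquad (t^p-1)\sum_{i=0}^{2p-4} t^{ip} = t^{p(2p-3)} - 1,
\end{equation*}
turns the first summand of $P(t)(t^p-1)(t^{2p-1}-1)$ into $(t^{2p}-1)\bigl(t^{(2p-1)(p-1)}-1\bigr)$ and the second into $-t(t^{2p-1}-1)\bigl(t^{p(2p-3)}-1\bigr)$. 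Expanding each as four monomials and using the exponent identities $(2p-1)(p-1) = 2p^2-3p+1$, $2p + (2p-1)(p-1) = 2p^2-p+1$, $p(2p-3) = 2p^2-3p$, and $(2p-1) + p(2p-3) = 2p^2-p-1$, the monomials $t^{2p}$ and $t^{2p^2-3p+1}$ occur with opposite signs in the two expansions and cancel, leaving precisely $t^{2p^2-p+1} - t^{2p^2-p} - t + 1 = (t-1)(t^{2p^2-p}-1)$.

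I expect the only real obstacle to be bookkeeping: one must track the four exponents in each of the two product expansions carefully to confirm the right cancellations, just as in the proof of Lemma~\ref{np+1poly}. There is no conceptual subtlety; the special case $p=2$, which must reproduce $\Delta_{T_{2,3}}(t) = 1 - t + t^2$, provides a convenient check at each step.
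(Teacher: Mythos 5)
Your proof is correct and takes essentially the same approach as the paper's: both verify the stated formula by clearing denominators in $\Delta_{T_{p,q}}(t)=\frac{(t-1)(t^{pq}-1)}{(t^p-1)(t^q-1)}$ and telescoping geometric sums, and your exponent bookkeeping checks out. The only cosmetic difference is that you multiply through by both factors $(t^p-1)(t^{2p-1}-1)$ and compare with $(t-1)(t^{2p^2-p}-1)$, whereas the paper clears only $(t^p-1)$ and compares with $(t-1)\sum_{i=0}^{p-1}t^{(2p-1)i}$.
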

\begin{corollary}[Proposition \ref{2p-1stair}]
The knot Floer homology complex of the $(p, 2p-1)$ torus knot is
\begin{equation*}
CFK^{\infty}(T_{p,2p-1}) \sim_{\varepsilon} (1,p-1) + (1,p-2) + (2, \hdots ).
\end{equation*}
\end{corollary}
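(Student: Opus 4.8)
The plan is to follow the strategy of the proof of Proposition~\ref{np+1stair}. Since $T_{p,2p-1}$ is a positive torus knot it is an L-space knot, so by the results of \cite{OS05} recalled in Subsection~\ref{lspacesub}, $CFK^\infty(T_{p,2p-1})$ is the staircase complex whose successive stair lengths are the gaps between consecutive exponents of $\Delta_{T_{p,2p-1}}(t)$, read off from the bottom up to the symmetry point. Thus the proof divides into three steps: (i) establish the closed form for $\Delta_{T_{p,2p-1}}(t)$ asserted in Lemma~\ref{2p-1poly}; (ii) sort the exponents and compute the bottom gaps to identify the staircase; (iii) apply Lemma~\ref{staircaselemma} to rewrite that staircase as $(1,p-1)+(1,p-2)+(2,\hdots)$.

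For step (i) I would argue exactly as in the proof of Lemma~\ref{np+1poly}. Starting from (\ref{torus_poly}) with $q=2p-1$ and the identity $\frac{t^{p(2p-1)}-1}{t^{2p-1}-1}=\sum_{i=0}^{p-1}t^{(2p-1)i}$, it suffices to verify the polynomial identity
\[
(t^p-1)\left(\sum_{i=0}^{p-2}\bigl(t^{(2p-1)i}+t^{(2p-1)i+p}\bigr)-\sum_{i=0}^{2p-4}t^{ip+1}\right)=(t-1)\sum_{i=0}^{p-1}t^{(2p-1)i}.
\]
Expanding the left-hand side, the product with $\sum t^{(2p-1)i}$ contributes $\sum t^{(2p-1)i+p}-\sum t^{(2p-1)i}$; the product with $\sum t^{(2p-1)i+p}$ reindexes via $(2p-1)i+2p=(2p-1)(i+1)+1$, and its $\sum t^{(2p-1)i+p}$ part cancels the one just produced; and $-(t^p-1)\sum t^{ip+1}$ telescopes after $t^{ip+1}\cdot t^p=t^{(i+1)p+1}$, leaving only $t - t^{(2p-3)p+1}$. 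Using the coincidence $(2p-3)p+1=(2p-1)(p-1)$, collecting the surviving boundary terms and factoring out $t-1$ yields the right-hand side. This is a routine if slightly delicate bookkeeping computation of the same flavor already carried out for Lemma~\ref{np+1poly}.

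For steps (ii) and (iii): the exponents form three arithmetic progressions, $\{(2p-1)i\}_{i=0}^{p-2}$ and $\{(2p-1)i+p\}_{i=0}^{p-2}$ with positive coefficient and $\{ip+1\}_{i=0}^{2p-4}$ with negative coefficient. Merging them from the bottom gives $0,\,1,\,p,\,p+1,\,2p-1,\,2p+1,\,3p-1,\dots$, so the initial stair lengths are $1,\,p-1,\,1,\,p-2,\,2,\dots$ and hence $CFK^\infty(T_{p,2p-1})\sim_\varepsilon (1,p-1,1,p-2,2,\hdots)$. To finish, one checks that from the fifth stair onward every stair length, up to the symmetry point $g=(p-1)^2$, is at most $p-2$ (and the fifth stair is exactly $2$): the gap $p-1$ between consecutive exponents of the form $(2p-1)i$ and $(2p-1)i+p$ occurs exactly once, next to the bottom of the polynomial, and its mirror image occurs past the symmetry point, while every other gap appearing in the merge is at most $p-2$. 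Given this bound, Lemma~\ref{staircaselemma} applies twice: first with $(a_1,a_2)=(1,p-1)$, whose odd entry $1$ and even entry $p-1$ bound all later stair lengths, to peel off $(1,p-1)$; then with $(a_1,a_2)=(1,p-2)$ to peel off $(1,p-2)$, leaving a staircase beginning with $2$. This gives $CFK^\infty(T_{p,2p-1})\sim_\varepsilon (1,p-1)+(1,p-2)+(2,\hdots)$.

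The main obstacle is purely combinatorial rather than conceptual: steps (ii)--(iii) require carefully interleaving the three arithmetic progressions, confirming the uniform gap bound $\le p-2$ past the fourth stair and only up to the symmetry point, and isolating the single exceptional gap of size $p-1$. The small cases $p=2,3$ must be handled directly, since there $(1,p-2)$ degenerates or the trailing $(2,\hdots)$ is empty; explicitly, $T_{2,3}$ has $CFK^\infty\sim_\varepsilon(1,1)$ and $T_{3,5}$ has $CFK^\infty\sim_\varepsilon(1,2,1,1)\sim_\varepsilon(1,2)+(1,1)$, both consistent with the stated form under the paper's convention of omitting trivial summands.
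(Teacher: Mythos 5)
Your proposal matches the paper's argument essentially step for step: the paper proves Lemma \ref{2p-1poly} by exactly the telescoping verification of the identity you describe, then reads off the staircase $(1,p-1,1,p-2,2,p-2,2,p-3,\hdots)$ from the gaps between exponents and invokes Lemma \ref{staircaselemma}, just as in your steps (ii)--(iii). One small slip in your added remark on degenerate cases: for $p=3$ the staircase of $T_{3,5}$ (listing arrows up to the point of symmetry, as in the paper's convention) is $(1,2,1)\sim_\varepsilon(1,2)+(1)$, not $(1,2,1,1)\sim_\varepsilon(1,2)+(1,1)$ --- the latter is false even on $\tau$ grounds, since $\tau(T_{3,5})=4\neq 3+2$; this is immaterial to the paper, which only uses the proposition for large parameters.
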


\begin{proof}[Proof of Lemma \ref{2p-1poly}]
\begin{equation*}
Since
\frac{t^{p (2p -1)} -1}{t^{2p-1}-1} = 1+ t^{2 p -1} + \hdots + t^{(p-1)(2p-1)},
\end{equation*}
by (\ref{torus_poly}), it suffices to show that
\begin{equation}
\label{2p-1start}
(t^p-1) \left( \sum_{i=0}^{p-2} \left( t^{(2 p -1)i} + t^{(2p-1)i + p} \right) - \sum_{i=0}^{2(p-2)} t^{i p + 1} \right) = (t-1) \left( \sum_{i=0}^{p-1} t^{(2p -1) i} \right).
\end{equation}
The terms of the LHS of (\ref{2p-1start}) telescope, leaving
\begin{equation*}
\sum_{i=0}^{p-2} \left( t^{(2p-1)i + 2p} - t^{(2p-1) i } \right) +t-t^{(2p-1) p + 1},
\end{equation*}
which further simplifies to
\begin{equation}
\label{2p-1done}
\sum_{i=0}^{p-2} (t^{(2p-1)i +1} -t^{(2p-1)i}),
\end{equation}
and factoring out $(t-1)$ from (\ref{2p-1done}) establishes (\ref{2p-1start}).
\end{proof}

\begin{proof}[Proof of Proposition \ref{2p-1stair}]
By inspecting the successive degrees of terms in the Alexander polynomial of $T_{p,2p-1}$, we see that
\begin{equation*}
CFK^{\infty}(T_{p,2p-1}) \sim_{\varepsilon} (1,p-1,1,p-2,2,p-2,2,p-3, \hdots),
\end{equation*}
and the result follows from Lemma \ref{staircaselemma}.
\end{proof}

Now we are ready to prove the Proposition \ref{sstair}.
\begin{proof}[Proof of Proposition \ref{sstair}]
The proof will be split into two cases: $p$ even and $p$ odd. 

If $p$ is even, by similar considerations as in the proof of Proposition \ref{k-staircase}, it suffices to show that the knot Floer complex of $T_{2,3;\frac{p}{2}+1,p+1}$ takes the desired form (up to $\varepsilon$-equivalence). 

We will begin by expanding the Alexander polynomial of $T_{2,3;\frac{p}{2}+1,p+1}$. By Lemma \ref{2p-1poly}, we have
\begin{equation}
\Delta_{T_{2,3;\frac{p}{2}+1,p+1}}(t) = (1-t^{\frac{p}{2}+1}+t^{p+2}) \cdot \left( \sum_{i=0}^{\frac{p}{2}-1} \left( t^{(p+1)i} + t^{(p+1)i + p/2 + 1} \right) - \sum_{i=0}^{p-2} t^{i ( p/2 + 1) + 1} \right).
\end{equation}
The term
\begin{equation*}
(1-t^{\frac{p}{2}+1}+ t^{p+2})\cdot \left( \sum_{i=0}^{\frac{p}{2}-1} \left( t^{(p+1)i} + t^{(p+1)i + p/2 + 1} \right) \right)
\end{equation*}
telescopes, leaving
\begin{equation*}
\sum_{i=0}^{\frac{p}{2}-1} t^{(p+1) i} + \sum_{i = 1}^{\frac{p}{2}} t^{(p+1) i +\frac{p } {2} + 2}, 
\end{equation*}
and 
\begin{equation*}
(1-t^{\frac{p}{2}+1}) \cdot \left( - \sum_{i=0}^{p-2} t^{i ( p/2 + 1) + 1} \right) = -t + t^{(p-1) (p/2 + 1) + 1} ,
\end{equation*}
so we have 
\begin{equation}
\label{Sparta} 
\Delta_{T_{2,3;\frac{p}{2}+1,p+1}}(t) = \sum_{i=0}^{\frac{p}{2}-1} t^{(p+1) i} + \sum_{i = 1}^{\frac{p}{2}} t^{(p+1) i +\frac{p } {2} + 2} -t - \sum_{i=2}^{p-2} t^{i(p/2+1) + 1} - t^{p (p/2+1) + 1} .
\end{equation}
Examining the degrees of successive terms, we see that 
\begin{equation*}
CFK^{\infty}(T_{2,3;\frac{p}{2}+1,p+1}) \sim_{\varepsilon} (1, p,2, p/2,1,p/2-2, \hdots ) ,
\end{equation*}
where the even entries after the ellipsis and before the point of symmetry are all less than $p$, so the result follows in this case from Lemma \ref{staircaselemma}.

The case $p$ odd is similar. Expand the formula
\begin{equation}
\Delta_{T_{2,3;\frac{p+1}{2},p+2}}(t) = \left( 1 - t^{\frac{p+1}{2}} + t^{p+2} \right) \cdot \left( \sum_{i=0}^{p-1} t^{i \frac{p+1}{2}} - \sum_{j=0}^{p-2} \left( t^{j (p+2) + 1} + t^{j(p+2) + \frac{p+1}{2} + 1} \right) \right)
\end{equation}
to obtain
\begin{equation}
\Delta_{T_{2,3;\frac{p+1}{2},p+2}}(t) = 1 + \sum_{i = 2}^{p-2} t^{i \frac{p+1}{2}} + t^{\frac{p^2+p}{2}} - \sum_{j=0}^{p-2} t^{j (p+2) +1} - \sum_{j=1}^{p-1} t^{j(p+2) + \frac{p+1}{2} + 1} ,
\end{equation}
showing that $CFK^{\infty} (T_{2,3; \frac{p+1}{2},p+2})$ is a staircase with the form
\begin{equation}
(1,p,2,\frac{p+1}{2}-2,2, \hdots ) ,
\end{equation}
establishing the proposition for case $p$ odd by Lemma \ref{staircaselemma}.
\end{proof}

\subsection{Proposition \ref{bigcableprop}}

Finally, we will establish Proposition \ref{bigcableprop}, completing the justification of all of the propositions used to prove the main theorem. Proposition \ref{bigcableprop} stated that the knot Floer complex of the $(n, n(p^2+p)+1)$ cable of the $(p,p+1)$ torus knot is $\varepsilon$-equivalent to the staircase complex
\begin{equation*}
(1, n p -1) + (1, n-1, \hdots ) . 
\end{equation*}

\begin{proof}[Proof of Proposition \ref{bigcableprop}]
As before, we will consider the knot's Alexander polynomial. 
\begin{align*}
\Delta_{T_{p,p+1;n,n(p^2+p)+1}}(t) &= \Delta_{T_{n,n(p^2+p)+1}}(t) \cdot \Delta_{T_{p,p+1}}(t^n) \\
&= \left( \sum_{i=0}^{(p^2+p) (n-1)} t^{i n} - \sum_{j=0}^{n-2} \sum_{i=0}^{p^2+p-1} t^{i n + j(n(p^2+p)+1)+1} \right) \left( \sum_{k=0}^{p-1} t^{k p n} - \sum_{k=0}^{p-2} t^{k n (p+1) + n} \right) .
\end{align*}
Consider first 
\begin{equation}
\label{firsttimes}
\left( \sum_{i=0}^{(p^2+p) (n-1)} t^{i n} \right) \left( \sum_{k=0}^{p-1} t^{k p n} - \sum_{k=0}^{p-2} t^{k n (p+1) + n} \right) ,
\end{equation}
which equals 
\begin{equation*}
\sum_{k=0}^{p-1} \sum_{i = 0} ^ {(p^2+p)(n-1)} t^{ i n + k n p} - \sum_{k=0}^{p-2} \sum_{i = 0}^{(p^2+p) (n-1)} t^{i n + k n p + (k+1) n } .
\end{equation*}
Observe that for $0 \leq k \leq p-2$, all the terms except the smallest and largest $k$ telescope, leaving the following expression for (\ref{firsttimes}):
\begin{equation}
\label{nstair_start}
\sum_{k=0}^{p-2} \sum_{i=0}^{k} t^{i n + k n p} - \sum_{k=0}^{p-2} \sum_{i = 0}^{k} t^{k n p + (k+1) n + n (p^2+p)(n-1) - i n} + \sum_{i=0} ^{(p^2+p)(n-1)} t^{(p-1) n p + i n} .
\end{equation}
We can write the first sum as follows, where each row is a $k$ value:
\begin{equation*}
\begin{tabular}{llllll}
$t^0$ \\
$+t^{n p}$ & $+ t^{n p + n}$ \\
$+t^{2 n p}$ & + $t^{2 n p + n}$ & + $t^{2 n p + 2n}$ \\
$\vdots$ & $\vdots$ & $\vdots$ &  $\ddots$\\
$+t^{(p-2) n p}$ & $+t^{(p-2) n p+ n}$ &$+ t^{(p-2) n p + 2n}$ &$+ \hdots$ & $+ t^{(p-2) n p + (p-2) n}$& $+ 0$ .
\end{tabular}
\end{equation*}
Let $m = (n-1)(p+1)$. The difference between the third sum and the second can be written as:
\begin{equation*}
\begin{tabular}{llllll}
$t^{(p-1) n p}$ & $+ t^{(p-1) n p+ n}$  & $+ t^{(p-1) n p + 2 n}$ & $+ \hdots$ &  $+ t^{(p-1) n p + (p-2) n}$ & $+ t^{(p-1) n p + (p-1) n} $\\
$\vdots$ & $\vdots$ & $\vdots$ & $\hdots$ & $\vdots$ & $\vdots$ \\ 
$+t^{(m-1) n p}$ & $+ t^{(m-1) n p+ n}$ & $+ t^{(m-1) n p+ 2n} $ & $+ \hdots$ & $ + t^{(m-1) n p + (p-2) n}$ & $+ t^{(m-1) n p + (p-1) n}$ \\
$+t^{m n p}$ & $+ 0$ & $+ t^{m n p + 2 n}$ &$+ \hdots$ & $+ t^{m n p + (p-2) n}$ &$+ t^{m n p + (p-1) n}$ \\
$+ t^{(m+1) n p}$ & $+0$ & $+ 0$ & $+\hdots$ & $+t^{(m+1) n p+ (p-2) n}$ & $+t^{(m+1) n p+ (p-1) n}$\\
$\vdots$ &$\vdots$  & $\vdots$  & $\hdots$  & $\vdots$  & $\vdots$ \\
$+t^{(m+p-3)n p}$ & $+ 0 $ & $+0$ &$+\hdots$ & $+0$ & + $t^{(m+p-3) n p + (p-1) n}$ \\
$+ t^{(m + p -2)  n p}$ .
\end{tabular}
\end{equation*}
Looking at the columns of the previous two tables, we see that
\begin{equation}
\label{nplusterms}
\sum_{i=0}^{n p + n -3} t^{ i n p} + \sum_{j=1}^{p-1} \sum_{i =0}^{(n -1) (p-1) -1} t^{jn(p+1) + i n p}
\end{equation}
is an expression for  (\ref{nstair_start}). 
The rest of the proof of the proposition is similar to the proof of Proposition \ref{k-staircase}. The smallest terms in (\ref{nplusterms}) are $1 + t^{n p} + t^{np + n} $ and the smallest terms in
\begin{equation}
\label{nminusterms}
\left( - \sum_{j=0}^{n-2} \sum_{i=0}^{p^2+p-1} t^{i n + j(n(p^2+p)+1)+1} \right) \left( \sum_{k=0}^{p-1} t^{k p n} - \sum_{k=0}^{p-2} t^{k n (p+1) + n} \right)
\end{equation}
that do not cancel are $-t - t^{np+1}$, so the beginning of the staircase looks like $(1, np-1, 1, p-1, \hdots )$.

The degree of every monomial in (\ref{nplusterms}) is divisible by $n$, but none of the monomials from (\ref{nminusterms}) have degree divisible by $n$. Finally, it is clear that for every monomial (up to the point of symmetry) $t^q$ in (\ref{nplusterms}), there is another monomial $t^{q'}$ with $q'-q \leq n p$ because such a monomial can be found in the first sum alone. The result follows from Lemma \ref{staircaselemma}.
\end{proof}

\bibliographystyle{amsalpha}
\bibliography{mybib}

\end{document}